\theoremstyle:=definition,remark,plain\do{%
 \expandafter\g@addto@macro\csname th@\theoremstyle\endcsname{%
 \addtolength\thm@preskip\parskip
 }%
 }
\renewcommand{\b}{\beta}
\renewcommand{\d}{\delta}
\newcommand{\g}{\gamma}
\newcommand{\e}{\varepsilon}
\newcommand{\real}{\operatorname{\mathrm{Re}}}
\newcommand{\imag}{\operatorname{\mathrm{Im}}}
\newcommand{\Spin}[1]{\mathrm{Spin}(#1)}
\newcommand{\SU}[1]{\mathrm{SU}(#1)}
\newcommand{\R}{\mathbb R}
\newcommand{\C}{\mathbb C}
\newcommand{\Z}{\mathbb Z}
\newcommand{\Gtwo}{\mathrm{G}_2}
\newcommand{\la}{\langle}
\newcommand{\ra}{\rangle}
\newcommand{\ZZ}{\mathbb{Z}}
\newcommand{\Fix}{\operatorname{Fix}}
\newcommand{\diag}{\operatorname{diag}}
\newcommand{\CC}{\mathbb{C}}
\newcommand{\CP}{\mathbb{CP}}
\newcommand{\hook}{\lrcorner\,}
\newcommand{\re}{\mathrm{Re}}
\renewcommand{\b}{\beta}
\theoremstyle{plain}
\newtheorem{proposition}{Proposition}
\newtheorem{theorem}[proposition]{Theorem}
\newtheorem{lem}[proposition]{Lemma}
\theoremstyle{definition}
\theoremstyle{remark}
\newtheorem{remark}{Remark}
\begin{document}

\title{Compact holonomy $\Gtwo$ manifolds need not be formal}

\author{ Luc\'ia Mart\'in-Merch\'an, \\
\emph{Department of Pure Mathematics, University of Waterloo} \\ \tt{lucia.martinmerchan@uwaterloo.ca} }

\maketitle

\begin{abstract}
\noindent We construct a compact, simply connected manifold with holonomy $\Gtwo$ that is non-formal. We use the construction method of compact torsion-free $\Gtwo$ manifolds developed by D.D. Joyce and S. Karigiannis. A non-vanishing triple Massey product is obtained by arranging the singular locus in a  particular configuration.
\end{abstract}

\tableofcontents

\section{Introduction} \label{sec:introduction}

The link between special holonomy and formality was discovered by P. Deligne, P. Griffiths, J. Morgan, and D. Sullivan in \cite{DGMS}. They proved that compact manifolds satisfying the $dd^c$-Lemma are formal. In particular, Kähler, Calabi-Yau, and hyper-Kähler compact manifolds are formal. Later, M. Amann and V. Kapovitch showed in \cite{AK} that compact quaternionic-Kähler manifolds with positive scalar curvature are also formal. For that, they use that their twistor space admits a Kähler metric, along with results from rational homotopy.

The question of formality for compact manifolds with exceptional holonomy
has remained open
since they were first constructed by D. D. Joyce in \cites{J1,J2,J3,J4}. In fact, these were conjectured to be formal in \cite[Conjecture 2]{AK}.
Various approaches have been explored to better understand the $\Gtwo$ case. Analytical methods were employed in \cites{CKT, Verbitsky}, but these seem to be inconclusive on their own. More precisely, in \cite{Verbitsky} M. Verbitsky extended the Kähler identities to Riemannian manifolds equipped with a parallel form, and built a cohomology algebra with special properties using them. From this, he provided an alternative proof of formality for Kähler manifolds. This idea was later applied in \cite{CKT} to torsion-free $\Gtwo$ manifolds, where they show that these are almost formal—meaning that all triple Massey products vanish except possibly those involving three degree-$2$ cohomology classes.

Another strategy is contained in \cite{CN}, where they studied the rational homotopy type of simply connected $7$-manifolds, among others. In particular, they built the Bianchi-Massey tensor as a tool to detect non-formality of $7$-manifolds, and they 
concluded that if there was a non-formal simply connected manifold with holonomy $\Gtwo$, it would have $b_2 \geq 4$.  Prior to \cite{CN}, G.R. Cavalcanti proved a weaker version of this result in \cite{Cavalcanti} and he also obtained a compact non-formal $7$-dimensional manifold that satisfies the known topological obstructions for holonomy $\Gtwo$ manifolds. 

The
 relative scarcity of holonomy $\Gtwo$ manifolds makes it hard to determine what phenomena might influence whether they are formal or not. Formality of one of Joyce's examples was proved in \cite{AT}. The examples by J. Nordström in \cite{Nord} are formal because they have $b_2 \leq 1$. 
 The author is unaware of any attempts to address the formality of the examples with
$b_2\geq 4$ provided by A. Kovalev \cite{Kovalev}, or by A. Kovalev and N. H. Lee \cite{KL}, or by  A. Corti, M. Haskins, J. Nordström and T. Pacini \cite{CHNP}.
In this paper, we use the desingularization method by D. D. Joyce and S. Karigiannis \cite{JK} to provide a negative answer to the question of formality for compact manifolds with holonomy $\Gtwo$. We prove the following.

\begin{theorem}
There exists a compact simply-connected manifold with holonomy $\Gtwo$ that is non-formal.
\end{theorem}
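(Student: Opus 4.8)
The plan is to produce the manifold by a single carefully designed Joyce--Karigiannis desingularization, and to separate the argument into a geometric existence part and a purely topological non-formality part. This separation is available because formality depends only on the rational homotopy type of the underlying smooth manifold, not on the metric: once we know $M$ is a smooth compact manifold admitting a torsion-free $\Gtwo$-structure of full holonomy, its non-formality can be detected by a topological computation of Massey products on $M$ alone.

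First I would fix a flat compact $\Gtwo$ orbifold of the form $T^7/\Gamma$, with $\Gamma$ a finite group preserving a flat $\Gtwo$-structure, chosen so that its singular set is a disjoint union of flat associative $3$-tori $L_1,\dots,L_n$ with transverse model $\C^2/\{\pm 1\}$, so that the construction of \cite{JK} applies. The freedom I want to exploit is combinatorial: the positions and gluing data of the $L_i$ can be arranged so that three of them realize a \emph{Borromean-type} higher-linking configuration --- pairwise trivial, but collectively nontrivial --- while all lower-order interactions are homologically trivial. I would then verify the hypotheses of the Joyce--Karigiannis theorem (the cohomological conditions on each $L_i$ and the solvability of the associated gluing equation), producing a compact smooth $M$ with a torsion-free $\Gtwo$-structure obtained by gluing in a family of Eguchi--Hanson spaces. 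Computing $\pi_1(M)$ by van Kampen, and using that the inserted Eguchi--Hanson pieces are simply connected, I would conclude $M$ is simply connected; Joyce's criterion (finite $\pi_1$ forces full holonomy for a compact torsion-free $\Gtwo$-structure) then gives $\Hol(M)=\Gtwo$.

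Next I would compute $H^2(M)$ via Mayer--Vietoris, identifying the exceptional classes $\alpha_i\in H^2(M)$ Poincaré dual to the $S^2$-bundles over the $L_i$ alongside the surviving classes of $H^2(T^7/\Gamma)$, and confirm $b_2(M)\ge 4$ (consistent with the obstruction of \cite{CN}). Since torsion-free $\Gtwo$ manifolds are almost formal by \cite{CKT}, the only Massey products that can obstruct formality are triple products of three degree-$2$ classes, so I would single out classes $a,b,c\in H^2(M)$ with vanishing pairwise cup products in cohomology and study the triple Massey product $\langle a,b,c\rangle\in H^5(M)$ modulo its indeterminacy $a\cup H^3(M)+H^3(M)\cup c$.

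The crux, and the step I expect to be the main obstacle, is proving $\langle a,b,c\rangle\ne 0$. Two dangers must be overcome: the representatives and their primitives live on the glued manifold, so they must be tracked carefully through the desingularization; and a Massey product is killed whenever the pairwise products admit disjoint-support representatives, which is why the three classes must be genuinely entangled rather than merely supported near disjoint $L_i$. My approach would be to pass to a topological model of a neighborhood of $L_1\cup L_2\cup L_3$, which by the configuration of the first step carries an explicit nonvanishing triple product detected by a higher-linking class in direct analogy with the meridians of the Borromean rings, and then to argue that this class survives restriction into $H^5(M)$ and escapes the indeterminacy. A nonvanishing triple Massey product obstructs formality, so the resulting $M$ is a compact simply connected manifold with holonomy $\Gtwo$ that is non-formal, as desired.
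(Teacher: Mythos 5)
Your overall architecture matches the paper's: a flat $7$-orbifold with associative $3$-torus singularities modelled on $\C^2/\{\pm 1\}$, the Joyce--Karigiannis resolution, simple connectivity plus Joyce's finiteness criterion for full holonomy, and a non-vanishing triple Massey product of degree-$2$ classes. But the crux --- the mechanism forcing the Massey product to be non-zero --- is exactly the step you defer, and the configuration you gesture at would not deliver it. If $a,b,c$ are the exceptional classes of three \emph{distinct} singular tori, their Thom-form representatives $\tau_i$ are compactly supported in disjoint neighbourhoods of the disjoint exceptional divisors, so $\tau_i\wedge\tau_j=0$ identically; one may then take both primitives $\xi_{12}=\xi_{23}=0$, and the representative $y$ of $\la a,b,c\ra$ is identically zero, so the product is trivial. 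This is precisely the ``disjoint-support'' death you yourself name, and the Borromean analogy does not rescue you: in a link complement the non-triviality of $\la a,b,c\ra$ comes from meridian representatives dual to pairwise \emph{intersecting} Seifert surfaces, whereas here the natural representatives are honestly disjointly supported Thom forms, and no arrangement of three tori, however linked, changes that.

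The paper's resolution needs more structure than three entangled components. It uses \emph{four} components $N_1,N_2,N_3,N_7$ and the classes $\mathbf{x}_1+\mathbf{x}_2$, $\mathbf{x}_7+\mathbf{x}_3$, $\mathbf{x}_7-\mathbf{x}_3$, so that the second and third entries share support and their product is the non-zero exact form $\tau_7^2-\tau_3^2$; well-definedness then rests on the global identities $\mathbf{x}_j^2=-2\mathrm{Th}[N_j]$ and $PD[N_3]=PD[N_7]$ (Lemma~\ref{lem:PDuals}), not on disjointness. A primitive of $\tau_7^2-\tau_3^2$ is built from an explicit oriented cobordism $C$ between $N_3$ and $N_7$, and the non-vanishing comes from the fact that $C$ meets $N_1$ with sign $-1$ and $N_2$ with sign $+1$: this yields $y'=-4[\varphi|_{N_1}]\otimes\mathbf{x}_1+4[\varphi|_{N_2}]\otimes\mathbf{x}_2$, while the degree-$5$ part of the ideal contains only the symmetric combination $[\varphi|_{N_1}]\otimes\mathbf{x}_1+[\varphi|_{N_2}]\otimes\mathbf{x}_2$, because $PD[N_1]=PD[N_2]$ forces $\int_{N_1}\xi=\int_{N_2}\xi$ for all $[\xi]\in H^3(X)$. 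To repair your argument you would need to (i) replace the three-torus Borromean picture by linear combinations of exceptional classes over cobordant pairs of components, (ii) actually exhibit a flat orbifold whose singular tori admit cobordisms with the required signed intersections while still satisfying the resolution hypotheses and simple connectivity, and (iii) establish the product formulas on the resolution; none of these is routine, and (ii) occupies the bulk of the paper.
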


The constructed manifold $(\widetilde{M}, \widetilde{\varphi})$ is obtained by resolving an orbifold $(X,\varphi)$, which itself is the quotient of a flat $7$-manifold under the action of an involution with fixed points. The singular locus of $X$ consists of $10$ disjoint associative manifolds, $N_1, \dots, N_{10}$, each of which is $2:1$ covered by a torus. This orbifold can also be resolved employing Joyce's method in \cite{J2}.
It was known that the cohomology groups of $\widetilde{M}$ are determined by those of $X$ and $N_1, \dots, N_{10}$ (see equation \eqref{eqn:cohom}), and the product structure depends on the embedding of the singular locus of $X$. 
More precisely, the square of the Thom class $\mathbf{x}_j$ of a connected component $E_j$ in the exceptional divisor lies in $H^*(X)$, and it depends on the Thom class of the corresponding connected component $N_j$ from which it arises (see expression \eqref{eqn:prod-1}). Here, the orientation of $N_j$ is determined by $\varphi$.

The novelty of this example is that we obtain a non-vanishing triple Massey product, even when the orbifold and the singular locus are formal, by positioning four of the connected components in a special configuration. Aside from the cohomology product formulas, the facts that guarantee that the triple Massey product
$
\la \mathbf{x}_1 + \mathbf{x}_2, \mathbf{x}_7 + \mathbf{x}_3, \mathbf{x}_7 - \mathbf{x}_3\ra
$
 is well-defined and non-vanishing are the following:
\begin{enumerate}
\item There are oriented cobordisms between $N_1$ and $N_2$ and between $N_3$ and $N_7$.
\item The cobordism between $N_3$ and $N_7$ intersects $N_1$ negatively and $N_2$ positively.
\end{enumerate}

This contrasts with the example examined in \cite{AT}, where both the orbifold and the singular locus are also formal, but the cobordisms between singular components can be chosen to avoid transverse intersections with other components. This also differs from the examples of compact manifolds with $b_1=1$ that are equipped with a closed $\Gtwo$ structure and obtained by orbifold resolution techniques. The formal manifold in \cite{FFKM} satisfies the same properties as the example analyzed in \cite{AT}. The manifold in \cite{LMM} is non-formal because the resolved orbifold is non-formal. 

This paper is organized as follows. In section \ref{sec:manifold} we construct $\widetilde{M}$ and we prove that it is simply connected and it admits a metric with holonomy $\Gtwo$. In section \ref{sec:nonformality} we obtain its cohomology algebra and show it is non-formal. The author has chosen to present most of the computations explicitly to ensure that the discussion is self-contained.

\textbf{Acknowledgements.} I would like to thank Spiro Karigiannis for carefully proofreading this paper and for providing suggestions that improved its exposition. I am also grateful to Dominic Joyce for his remarks, and Vicente Muñoz for his interest and opinion.

 \newpage

\section{A simply connected compact manifold with holonomy $\Gtwo$} \label{sec:manifold}

Consider the flat $6$-torus $T^6 = (\CC/\Gamma)^3$, where $\Gamma=\Z \la 1, i \ra$, endowed with its standard flat $\SU{3}$ structure $(\omega,\Theta)=(\frac{i}{2}\sum_k dz_{k\bar{k}}, dz_{123})$. The isometry
\begin{equation} \label{eqn:mapping-function}
f\colon T^6 \to T^6, \qquad  f[z_1,z_2,z_3]=[iz_1,iz_2,-z_3]
\end{equation}
preserves the $\SU{3}$ structure. 
The manifold
\begin{equation} \label{eqn:M-def}
M=\R \times T^6/(t,[z])\sim (t+k,f^k[z]), \quad k\in \Z,
\end{equation}
equipped with the flat metric
has a parallel $\Gtwo$ structure 
$
\varphi=dt\wedge \omega+ \re(\Theta)
$. Since $f^4=\mathrm{Id}$, the $7$-torus  
$
\mathbb{T}= \R/4\ZZ \times T^6
$ covers $M$.
The Deck transformation group of the covering is $\Z_4$, generated by $F[t,[z]]=[t+1,f[z]]$. We quotient $M$ by the involutions 
\begin{equation}\label{eqn:iota}
\iota_1,\iota_2 \colon M \to M, \qquad \iota_1[t,[z]]=[t,[-z_1,-z_2,z_3+1/2]], \qquad 
\iota_2[t,[z]]=[t,[-z_1,-z_2,z_3+i/2]].
\end{equation}
Neither of these involutions, nor their compositions, have fixed points. Since they preserve $\varphi$, the quotient space
$M'=M/\la \iota_1,\iota_2 \ra$ is a smooth manifold with a parallel $\Gtwo$ structure and a flat metric. Observe that there is a fibration $T^6/\Z_2^2 \hookrightarrow M' \to S^1$. In addition, the involution 
\begin{equation}\label{eqn:kappa}
\kappa \colon M \to M, \qquad \kappa[t,[z]]=[1-t,\bar{z}_1,\bar{z}_2,\bar{z}_3],
\end{equation}
is well-defined because
$
\kappa[t+1,f[z]]=[-t,-i\bar{z}_1,-i\bar{z}_2,-\bar{z}_3]=[1-t,\bar{z}_1,\bar{z}_2,\bar{z}_3]=\kappa[t,[z]].
$
It determines a map $\kappa \colon M' \to M'$ as
it commutes with both $\iota_1$ and $\iota_2$. Moreover, $\kappa$ has fixed points and $\kappa^*(\varphi)=\varphi$.  

The orbifold
$X=M'/\kappa$ can be alternatively described as the quotient of the torus $\mathbb{T}=\R/4\Z \times T^6$ by the group $D_4\times \Z_2^2$. Here $D_4$ is the dihedral group spanned by $\kappa$ and $F$ (note that $\kappa \circ F=F^{-1}\circ \kappa$), whereas $\Z_2^2$ is generated by $\iota_1$ and $\iota_2$.

\begin{remark} 
The orbifold $X$ is related to the examples provided by Joyce in \cite{J2}. More precisely, a change of variables transforms $X_1=\mathbb{T}^7/\la F, \kappa \ra$ into Example $9$, and $X_2=\mathbb{T}^7/\la F, \kappa, \iota_1 \circ \iota_2 \ra$ into Example $10$. In particular,
there is a branched covering $X_2 \to X$, because 
$\iota_1$ has fixed points when it acts on $X_2$ (a fixed point of $\iota_1$ on $X_2$ is the projection of $p=[0,0,0,1/4]$ as $\iota_1 (\kappa (p)) =[1,0,0,3/4]=p$).
\end{remark}

\subsection{Singular locus} \label{sec:sing}
The singular locus of $X$ is the projection of the fixed locus of $\kappa$, $\kappa \circ \iota_1$, $\kappa \circ \iota_2$, and $\kappa \circ \iota_1\circ \iota_2$ on $M$.  We observe that if $\gamma$ is one of these maps, and we denote it by $\gamma[t,[z]]=[1-t,\gamma' [z]]$, then a point $[t,[z]]$ with $t\in [0,1)$ is fixed by $\gamma$ if and only if $t=1/2$ and $[z]\in \Fix(\gamma')$, or $t=0$ and $[z]\in \Fix(f^{-1}\circ \gamma')$.
We now compute these and analyze their projections to $X$. 

We begin with the level set $t=0$. From $f^{-1}[z]=[-iz_1,-iz_2,-z_3]$, we obtain
\begin{align*}
f^{-1}\circ \kappa' [z]=&[-i\bar{z}_1,-i\bar{z}_2,-\bar{z}_3], \qquad \qquad 
&f^{-1}\circ (\kappa\circ \iota_1)' [z]=[i\bar{z}_1,i\bar{z}_2,-\bar{z}_3 + 1/2], \\
f^{-1}\circ (\kappa\circ \iota_2)' [z]=&[i\bar{z}_1,i\bar{z}_2,-\bar{z}_3 + i/2], \qquad 
&f^{-1}\circ (\kappa\circ \iota_1 \circ \iota_2)' [z]= [-i\bar{z}_1,-i\bar{z}_2,-\bar{z}_3 + (1+i)/2].
\end{align*}
Hence, $\Fix(f^{-1}\circ (\kappa \circ \iota_2)')=\Fix(f^{-1}\circ (\kappa \circ \iota_1\circ \iota_2)')=\emptyset$ (these act on $y_3$ as $y_3 \to y_3 + 1/2$) and,
\begin{align}
\Fix(f^{-1}\circ \kappa')=&\{ [x_1-ix_1, x_2-ix_2, \e_3 + iy_3],\quad \e_3 \in \{0,1/2\}, \,\, x_1,x_2,y_3\in [-1/2,1/2] \} , \label{eqn:N1} \\
\Fix(f^{-1}\circ (\kappa\circ \iota_1)')=&\{ [x_1+ix_1, x_2+ix_2, \delta_3 + iy_3], \quad \delta_3 \in \{-1/4,1/4\}, \,\,  x_1,x_2,y_3\in [-1/2,1/2] \}. \label{eqn:N2}
\end{align}
We denote $N_1^1, N_1^2 \subset M$ the  connected components of $\Fix(\kappa)$ at $t=0$ determined by $\e_3=0$ and $\e_3=1/2$, respectively. Similarly, we set $N_2^1,N_2^2\subset M$ the components of $\Fix(\kappa \circ \iota_1)$ at $t=0$ determined by $\delta_3=-1/4$ and $\delta_3=1/4$. Each of these is diffeomorphic to a $3$-torus. The map $\iota_1$
interchanges $N_j^1$ with $N_j^2$, while $\iota_2$
preserves each connected component.
In addition, $N_j^k \to N_j^k/\iota_2$ is a $2:1$ covering, and $H^1(N_j^k/\iota_2)=\la dy_3\ra$. The form $dy_3$ is harmonic because $N_j^k$ is equipped with the flat metric.
In fact, the map $N_1^1/\iota_2 \to \R/\Z$, $[x_1-ix_1,x_2-ix_2,iy_3]\to [2y_3]$ is a submersion with a $T^2$ fiber. A similar statement holds for the remaining connected components. Therefore, the fixed locus of $\kappa$ on $M'$ at $t=0$ has two connected components, $N_1'$ and $N_2'$. Both admit the nowhere-vanishing  harmonic $1$-form $dy_3$. We denote by $N_1$ and $N_2$ their projections to $X$.

To analyze the singularities at $t=1/2$, we compute
\begin{align*}
\kappa' [z]=&[\bar{z}_1,\bar{z}_2,\bar{z}_3], \qquad \qquad 
&(\kappa\circ \iota_1)' [z]=[-\bar{z}_1,-\bar{z}_2,\bar{z}_3 + 1/2], \\
(\kappa\circ \iota_2)' [z]=&[-\bar{z}_1,-\bar{z}_2,\bar{z}_3 + i/2], \qquad 
&(\kappa\circ \iota_1 \circ \iota_2)' [z]= [\bar{z}_1,\bar{z}_2,\bar{z}_3 + (1+i)/2].
\end{align*}
Hence, $\Fix((\kappa \circ \iota_1)')=\Fix((\kappa \circ \iota_1\circ \iota_2)')=\emptyset$ and,
\begin{align}
\Fix(\kappa')=&\{ [x_1+ i\e_1, x_2+ i\e_2, x_3 + i\e_3 ], \quad \e_1,\e_2, \e_3 \in \{0,1/2\}, \,\,  x_1,x_2,x_3\in [-1/2,1/2] \} , \label{eqn:N3-6}\\
\Fix((\kappa\circ \iota_2)')=&\{ [\e_1+ iy_1, \e_2+iy_2, x_3 + i\delta_3], \quad \e_1,\e_2, \d_3-1/4 \in \{0,1/2\}, \,\,  y_1,y_2,x_3\in [-1/2,1/2] \}.\label{eqn:N7-10}
\end{align}
We denote by $N_3^1,N_3^2 \subset M$ the connected components of $\Fix(\kappa)$ at $t=1/2$ determined by $\e_1=\e_2=0$ and $\e_3=0$ or $\e_3=1/2$ respectively. We use lexicographic order for the pairs 
$(\e_1,\e_2)$ and label the remaining connected components of $\Fix(\kappa)$ at $t=1/2$ by $N_4^1,N_4^2,N_5^1,N_5^2,N_6^1,N_6^2$,
with the upper index $1$ corresponding to 
$\e_3=0$. Similarly, we denote the connected components of $\Fix(\kappa \circ \iota_1)$ by $N_j^k$, where $j=7,\dots,10$, $k=1,2$, and the upper index $k=1$ now indicates $\delta_3=-1/4$. All these components are diffeomorphic to a $3$-torus.
We observe that $\iota_1$  preserves each connected component, while $\iota_2$ 
swaps $N_j^1$ with $N_j^2$. Also, $N_j^k \to N_j^k/ \iota_1 $ is a $2:1$ covering, and $H^1(N_j^k/\iota_1)=\la dx_3\ra$. Again, $dx_3$ is indeed harmonic on $N_j^k$ and there are submersions $N_j^k/\iota_1 \to \R/\Z$ with a $T^2$ fiber.
The fixed locus of $\kappa$ on $M'$ at $t=1/2$ has eight connected components, $N_3',\dots, N_{10}'$. These admit the nowhere-vanishing harmonic $1$-form $dx_3$. We denote by $N_3,\dots, N_{10}$ their projections to $X$.

 The submanifolds $N_j^k$ and $N_j'$ are associative submanifolds of $(M,\varphi)$ and $(M',\varphi)$ (see \cite[Proposition 2.13]{JK}).
\emph { We will always assume that $N_j^k$ (and $N_j'$, $N_j$) are oriented by $\varphi|_{N_j^k}=\re(dz_{123})|_{N_j^k}$ (and $\varphi|_{N_j'}$, $\varphi|_{N_j}$).}
Using the coordinates at $t=0$ and $t=1/2$ provided by equations \eqref{eqn:N1},\eqref{eqn:N2},\eqref{eqn:N3-6} and \eqref{eqn:N7-10} these forms are:
\begin{align}
\varphi|_{N_1^k}=&\, 2\, dx_1\wedge dx_2 \wedge dy_3, \qquad k=1,2, \label{eqn:or-1} \\
\varphi|_{N_2^k}=& -2\, dx_1\wedge dx_2 \wedge dy_3, \qquad k=1,2, \label{eqn:or-2} \\
\varphi|_{N_j^k}=&\,dx_1\wedge dx_2 \wedge dx_3, \qquad j=3,4,5,6 \quad k=1,2, \label{eqn:or-3-6} \\
\varphi|_{N_j^k}=&-dy_1\wedge dy_2 \wedge dx_3, \qquad j=7,8,9,10, \quad k=1,2. \label{eqn:or-7-10}
\end{align}
 Note that if $j=1$ (or $j=2$), the projections of $N_j^k$ to the first and second factor of $(\C/\Gamma)^3$ are the loops $[x-ix]$ (or $[x+ix]$), $x\in [-1/2,1/2]$. These have length $\sqrt{2}$. The projection to the third factor has, of course, length $1$. Therefore, the volume of these tori is $2$, which is why the factor of $2$ appears.

\subsection{Resolution} \label{sec:resolution}

The resolution method developed in \cite{JK} allows to desingularize the orbifold X and to obtain a metric
with holonomy $\Gtwo$ on the resolution. 

\begin{theorem}
The resolution $\widetilde{M}$ of $X$ is a compact simply connected manifold that 
admits a metric with holonomy $\Gtwo$.
\end{theorem}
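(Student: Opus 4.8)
The overall strategy is to invoke the Joyce–Karigiannis desingularization theorem \cite{JK} as a black box for the analytic hard part, and to verify by hand the topological and structural hypotheses that make it applicable. First I would confirm that $X$ is a $\Gtwo$ orbifold of the type treated in \cite{JK}: its singular locus is the disjoint union of the associative submanifolds $N_1,\dots,N_{10}$ computed in Section \ref{sec:sing}, each of which is $2{:}1$ covered by a flat $3$-torus and carries a nowhere-vanishing harmonic $1$-form ($dy_3$ on $N_1,N_2$, and $dx_3$ on $N_3,\dots,N_{10}$). This nowhere-vanishing harmonic $1$-form is precisely the datum required in \cite{JK} to build the family of Eguchi–Hanson-type resolutions transverse to each $N_j$, so the existence of the torsion-free $\Gtwo$ structure $\widetilde\varphi$ on the resolution $\widetilde M$, together with compactness (the resolution is a gluing of the compact orbifold with compact model pieces over each $N_j$), follows directly.

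Next I would upgrade ``torsion-free'' to ``full holonomy $\Gtwo$.'' Since $\widetilde\varphi$ is a torsion-free $\Gtwo$ structure, $\mathrm{Hol}(\widetilde g)\subseteq \Gtwo$, and by Joyce's holonomy criterion equality $\mathrm{Hol}(\widetilde g)=\Gtwo$ holds if and only if $\widetilde M$ has finite fundamental group (equivalently $b_1(\widetilde M)=0$). Thus the holonomy statement reduces to the simple-connectedness statement, which I treat as the central point.

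For simple connectivity I would argue at the level of the orbifold $X=\mathbb T/(D_4\times\Z_2^2)$ and then control what the resolution does to $\pi_1$. The resolution replaces a tubular neighbourhood of each singular stratum $N_j$ by a smooth model; since the Eguchi–Hanson fibre is simply connected and the gluing is performed over the $N_j$, Van Kampen shows that $\pi_1(\widetilde M)$ is a quotient of $\pi_1(X)$ (more precisely of $\pi_1(X^{\mathrm{reg}})$), so it suffices to show this group is trivial after resolving. I would compute $\pi_1(X)$ from the torus quotient: $\pi_1(\mathbb T)=\Z^7$ sits in an extension with $D_4\times\Z_2^2$, and I would check that the translational parts of the generators $F,\kappa,\iota_1,\iota_2$ (note $\kappa$ inverts $t$ and conjugates the $z_k$, while $F$ contributes the unit shift in $t$ together with multiplication by $i$) together generate enough relations to kill $\pi_1$; the presence of fixed points of $\kappa$ is what forces the reflection relations that collapse the $\R/4\Z$ and torus directions. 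Concretely I expect $\pi_1(X)=1$ already, using that $\kappa$ acts as an orientation-reversing reflection with fixed points so that loops can be contracted through the cone points. The main obstacle is this last step: carefully tracking the semidirect-product structure of $\pi_1$ and verifying that no torsion-free quotient survives the combined action of the reflections, while simultaneously checking that the local resolution data (the choice of harmonic $1$-form and the orientation conventions \eqref{eqn:or-1}--\eqref{eqn:or-7-10}) do not reintroduce any $1$-cycles. Once $\pi_1(\widetilde M)=1$ is established, compactness, simple connectivity, and full holonomy $\Gtwo$ all follow.
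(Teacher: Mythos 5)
Your outer skeleton is exactly the paper's: invoke \cite{JK} using the nowhere-vanishing harmonic $1$-forms on the $N_j$ to get a compact resolution with a torsion-free $\Gtwo$ structure, identify $\pi_1(\widetilde M)$ with (a quotient of) $\pi_1(X)$, and apply Joyce's criterion \cite[Proposition 10.2.2]{Joyce2} to reduce full holonomy to finiteness of the fundamental group. The problem is that the one step carrying all the content --- the proof that $\pi_1(X)=1$ --- is not actually performed: you state that you ``expect'' $\pi_1(X)=1$ and that loops ``can be contracted through the cone points,'' and you yourself flag the verification as the main obstacle. That is a genuine gap, not a detail. Moreover, your proposed criterion (``no torsion-free quotient survives'') would at best give finiteness, which suffices for the holonomy claim but not for the simple connectivity asserted in the theorem; and $\kappa$ is not orientation-reversing on $M$ (it acts as $-1$ on a $4$-dimensional normal space to its fixed locus, hence preserves orientation, as it must since $\kappa^*\varphi=\varphi$).

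To see why the missing step is nontrivial, compare with what the paper does. It first computes $\pi_1(M)\cong\pi_1(S^1)\ltimes\pi_1(T^6)$ from the mapping-torus fibration, recording the explicit conjugation relations \eqref{eqn:rel-fund-1}--\eqref{eqn:rel-fund-2} induced by the monodromy $f$. It then passes to the $4{:}1$ cover $M\to M'$, where two new generators $\gamma_5',\gamma_6'$ (lifting the half-period translations $\iota_1,\iota_2$) appear and must be controlled. Because $\kappa$ acts on $M'$ with fixed points, Armstrong's theorem (cited via \cite[Chapter 2, Corollary 6.3]{Bre}) gives surjectivity of $\pi_1(M')\to\pi_1(X)$, and one then kills the generators one at a time: those loops literally reversed by $\kappa$ die by an explicit reparametrization, the relations \eqref{eqn:rel-fund-1} kill $\gamma_1',\gamma_3'$ and show $[\gamma_5']^2=1$ in $\pi_1(X)$, but killing $[\gamma_5']$ itself requires a separate homotopy conjugating it into a loop at the level set $t=1$ that $\kappa$ reverses. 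None of this is automatic from ``the translational parts generate enough relations''; without carrying it out (or an equivalent crystallographic-group computation for $\mathbb T/(D_4\times\Z_2^2)$ that accounts for the non-free action), the proof is incomplete.
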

\begin{proof}
Since each connected component of the singular locus of $X$ admits a nowhere-vanishing harmonic $1$-form, the resolution method  \cite[Theorem 1.1]{JK}
ensures that there
is a resolution $\rho \colon \widetilde{M} \to X$ and a torsion-free $\Gtwo$ structure $\widetilde{\varphi}$ on $\widetilde{M}$. It satisfies $\pi_1(\widetilde{M})=\pi_1(X)$. According to \cite[Proposition 10.2.2]{Joyce2}, the holonomy of $(\widetilde{M},\widetilde{\varphi})$ equals $\Gtwo$ if and only if $\pi_1(\widetilde{M})$ is finite. To finish the proof, it suffices to show that $X$ is simply connected.

We first compute $\pi_1(M,p_0)$ with $p_0=[1/2,[0,0,0]]$. From the long exact sequence of the fibration $T^6 \hookrightarrow M \to \R/\Z$, $[t,[z]]\longmapsto [t]$, we obtain the short exact sequence
$$
\{1\} =\pi_2(S^1) \to \pi_1(T^6) \to \pi_1(M) \to \pi_1(S^1) \to \{1\}.
$$
Of course, the loop ${\g}_0(s)= [1/2+s,[0,0,0]]$ in $M$  maps to the generator of $\pi_1(S^1)$, and provides a splitting of the short exact sequence. Therefore, $\pi_1(M)=\pi_1(S^1) \ltimes \pi_1(T^6)$. We now analyze the product of $[\gamma_0]$ with elements in the image of $\pi_1(T^6) \hookrightarrow \pi_1(M)$. Given a loop $\gamma(s)=[1/2,\widehat{\gamma}(s)]$ on $M$ with $\widehat{\gamma}(0)=[0,0,0]$, the loop $\g_0^{-1} \cdot \gamma \cdot \g_0$ is homotopic to 
$s \to [3/2,\widehat{\gamma}(s)]=[1/2,f^{-1} \circ \widehat{\gamma}(s)]$ on $M$. (As usual, we denote $\gamma^{-1}(s)=\gamma(1-s)$.)
The homotopy $H_{r}(s)$ is a sequence of three paths. First, move along the $t$-direction from $[3/2,[0,0,0]]$ to $[3/2-r,[0,0,0]]$. Then travel on the level set $3/2-r$ via the map $s\to  [3/2-r,\widehat{\gamma}(s)]$, and finally return to $[3/2,[0,0,0]]$ by reversing the initial path. 

We denote by $\g_k(s)=[1/2,\widehat{\gamma}_k(s)]$, $k=1,\dots,6$ the canonical generators of $\pi_1(\{1/2\}\times T^6,p_0)$. Namely, $\widehat{\gamma}_1(s)=[s,0,0]$, $\dots$, $\widehat{\gamma}_6(s)=[0,0,is]$. For $k=1,2$ we have
$f^{-1}\circ \widehat{\gamma}_{2k-1}=\widehat{\gamma}_{2k}^{-1}$ and $f^{-1}\circ \widehat{\gamma}_{2k}=\widehat{\gamma}_{2k-1}$. In addition, if $k=5,6$ then $f^{-1}\circ \widehat{\gamma}_{k}=\widehat{\gamma}_k^{-1}$. This implies the following relations:
\begin{align}
 \label{eqn:rel-fund-1}
[{\g}_{2k}]^{-1}&=[{\g}_0]^{-1}[ {\g}_{2k-1}][{\g}_0],\qquad
&[{\g}_{2k-1}]=[{\g}_0]^{-1}[{\g}_{2k}][{\g}_0], &\qquad k=1,2, \\
\label{eqn:rel-fund-2}
[{\g}_{5}]^{-1}&=[{\g}_0]^{-1}[ {\g}_{5}] [{\g}_0],\qquad
&[{\g}_{6}]^{-1}=[{\g}_0]^{-1}[{\g}_{6}][{\g}_0].\quad &
\end{align}

The projection $q\colon M\to M'$ is a $4:1$ cover with Deck transformation group $\la \iota_1,\iota_2\ra = \Z_2^2$.  There is a short exact sequence,
$$
\{1\} \to \pi_1(M) \to  \pi_1(M') \to \la \iota_1,\iota_2 \ra \to \{1\}.
$$

Consider the paths 
$$\widetilde{\gamma}_{5}, \widetilde{\gamma}_{6}\colon [0,1]\to M, \qquad 
\widetilde{\gamma}_{5}(s)=[1/2,[0,0,s/2]]
,\qquad
\widetilde{\gamma}_{6}(s)=[1/2,[0,0,is/2]].
$$
Then $\gamma_5'=q \circ \widetilde{\gamma}_{5}$ and $\gamma_6'=q \circ \widetilde{\gamma}_{6}$ are loops on 
 $M'$. In the long exact sequence, their homotopy classes project onto $\iota_1$ and $\iota_2$ respectively.  If $0\leq k\leq 4$ we denote  $\gamma_k'=q\circ \gamma_k$. In addition, define the paths on $M$
$$
\widetilde{\gamma}_{0}, {\gamma}_7 \colon [0,1]\to M, \qquad
\widetilde{\gamma}_0(s)=[1/2 +s,[0,0,1/2]], \qquad 
{\gamma}_7(s)=[3/2,[0,0,s/2]]=[1/2,0,0,-s/2].
$$
There is a base-point preserving homotopy on $M$ from $\gamma_0^{-1} \cdot \gamma_5' \cdot \widetilde{\gamma}_0$
to ${\gamma}_7$. The way it is constructed is the same as before.
Since $\gamma_0'=q\circ \widetilde{\gamma}_0$ and $q \circ {\gamma}_7 (s)=q[1/2,0,0,1/2-s/2]= (\gamma_5')^{-1}(s)$
we obtain $[ \gamma_0']^{-1}[{\gamma}_5'][ \gamma_0']= [\gamma_5']^{-1} $ on $M'$; a similar argument shows $[\gamma_0']^{-1}[{\gamma}_6'][ \gamma_0']= [\gamma_6']^{-1} $. 

The involution $\kappa \colon M' \to M'$ has fixed points; therefore the map $q'_*\colon \pi_1(M') \to \pi_1(X)$  induced by the projection $q'\colon M' \to X$ is surjective (see \cite[Chapter 2, Corollary 6.3]{Bre}). 
Observe that $\kappa$ reverses the direction of $\gamma_0'$, $\gamma_2'$, $\gamma_4'$ and ${\gamma}_6'$. That is, if $\gamma$ is one of these loops, then $\kappa \circ \gamma(s)=\gamma^{-1}(s)$.
This implies that $q_*'[\gamma]=1$ because $q'\circ \gamma= q'\circ \gamma''(s)$ where
$$
\gamma''(s)=
\begin{cases}
\gamma(s), &\mbox{if } t \leq 1/2, \\
\gamma^{-1}(s), &\mbox{if } t \geq 1/2,
\end{cases}
$$
 and $\gamma''(s)$ is trivial on $M'$. Hence, $q'_*[\gamma_0'] = q'_*[\gamma_2']= q'_*[\gamma_4']=  q'_*[\gamma_6']=  1$. 
From the relations \eqref{eqn:rel-fund-1}, and $[\gamma_5']^{-1}=[\gamma_0']^{-1}[\gamma_5][\gamma_0']$ we obtain $q'_*[\gamma_1']=  q'_*[\gamma_3'] = q'_*[\gamma_5']^2 = 1$. We finally show that, indeed, $q'_*[{\gamma}_5']=  1$.
We first consider the paths  
$$
\gamma_8, {\gamma}_9,{\gamma}_{10}  \colon [0,1]\to M, 
\quad \gamma_8(s)=[1/2+s/2,[0,0,0]],  \quad \gamma_{9}(s)=[1,[0,0,s/2]], \quad \gamma_{10}(s)=[1/2+s/2,[0,0,1/2]].
$$
There is a base-point preserving homotopy on $M$ 
from $\gamma_8 \cdot \gamma_{9} \cdot \gamma_{10}^{-1}$ to $\widetilde{\gamma}_5$.
 Since  $q\circ \gamma_8=q\circ \gamma_{10}$, this implies that $ (q\circ \gamma_8) \cdot  (q\circ \gamma_{9}) \cdot (q\circ \gamma_8)^{-1}$ is homotopic to ${\gamma}_5'$. We now observe that 
$$
\kappa \circ (q \circ \gamma_{9}) (s)=q[0,[0,0,s/2]]=q[1,[0,0,-s/2]]=q[1,[0,0,1/2-s/2]]=(q\circ \gamma_{9})^{-1}(s).
$$
The argument above ensures that the loop $q' \circ q \circ  \gamma_{9}$ is trivial on $X$. Hence, 
$$
q' \circ {\gamma}_{5}' \sim
(q' \circ q\circ\gamma_8) \cdot  (q' \circ q\circ \gamma_{9}) \cdot (q' \circ q\circ\gamma_8)^{-1} \sim (q' \circ q\circ\gamma_8) \cdot (q' \circ q\circ\gamma_8)^{-1} \sim 1.
$$

Therefore, $\pi_1(\widetilde{M})=\pi_1(X)=\{1\}$.
\end{proof}
\begin{remark}
    The resolution method by Joyce \cite[Theorem 2.2.3]{J2} also allows to desingularize the orbifold $X$ and to obtain a metric with holonomy $\Gtwo$ on the resolution. 
    We used the method in \cite{JK}, even if it involves harder analytic tools that are not necessary here, to take advantage of the author's previous knowledge about the coholomology algebra of the resolution (see \cite{LMM}).
\end{remark}

We denote by $\rho \colon \widetilde{M} \to X$ the projection map, and $E_j= \rho^{-1}(N_j)$. The resolution described in \cite{JK} occurs on the fibers of the normal bundle $\nu(N_j)$ of $N_j\subset X$, which are of the form $\C^2/\Z_2$. These singular fibers are replaced by their algebraic resolution,
$$
Y=\{ (w,\ell)\in \C^2\times \CP^1, \, w\in \ell \}/(w,\ell)\sim (-w,\ell).
$$
We describe this procedure in our set-up.  First, 
$N_j'=N_j^1/\iota$, where $\iota=\iota_2$ if $j=1,2$ of $\iota=\iota_1$ if $3\leq j \leq 10$. Indeed,
there is a $2:1$ cover between the normal bundles
$\nu(N_j^1) \to \nu(N_j')$ induced by the projection map $M \to M'$. The Deck transformation group of the cover is generated by the involution $\iota_*$. Both bundles have a complex structure $\mathrm{I}_j$ induced by the cross product with the vector field $\chi_j$ determined  by the harmonic form (see \cite[Remark 4.1]{JK}). That is,
 $\chi_j=\partial_{y_3}$ if $j=1,2$ or $\chi_j=\partial_{x_3}$ if $3\leq j \leq 10$, and $\mathrm{I}_j(X)$ is determined by the expression
$$
g(\mathrm{I}_j(X),Y)=\varphi(\chi_j,X,Y)
.
$$
The action of $\iota_*$ on $\nu(N_j^1)$ preserves $\mathrm{I}_j$ because $\iota_*$ fixes $\varphi$, $\chi_j$ and $g$, and the projection $\nu(N_j^1) \to \nu(N_j')$ is a complex homomorphism.
In addition, the bundle $\nu(N_j^1)$ is trivial. Since $\mathrm{I}_j(\partial_t)\in \la  \partial_{x_3} ,\partial_{y_3}  \ra$,  we pick a trivialization of the form $(\partial_t, I_j(\partial_t), X,I_j(X))$ where $X\in \la \partial_{x_1},\partial_{y_1}, \partial_{x_2} ,\partial_{y_2}  \ra \cap \nu(N_j^1)$. For instance,  if $j=1$ this is $(\partial_t, \partial_{x_3}, \partial_{x_1}+ \partial_{y_1}, -\partial_{x_2}- \partial_{y_2})$, and if $j=3$ this is $(\partial_t, -\partial_{y_3}, \partial_{y_1}, -\partial_{y_2})$. The orientation of $\nu(N_j^1)$ induced from $N_j$ and $M$ is determined by the form $(\chi_j \hook\varphi|_{\nu(N_j^1)})^2$, and the frame above is a positive basis. Therefore, the trivialization yields an isomorphism of oriented vector bundles $\nu(N_j^1)\cong N_j^1\times \C^2$, where $\C^2$ has the standard orientation.  For any $j$, under the isomorphism, the generator of the Deck group of $\nu(N_j^1) \to \nu(N_j')$ is $\iota_*(x,w_1,w_2)=(\iota(x),w_1,-w_2)$. 

The involution $\kappa$ induces the $\Z_2$-action on the fibers of $\nu(N_j')$ given by $v_x \to \kappa_*(v_x)=-v_x$ (see the proof of Lemma 2.8 in \cite{LMM}).
This  can be lifted to a map $\hat{\kappa} \colon N_j^1\times \CC^2 \to N_j^1\times \CC^2$ by means of the expression $\hat{\kappa}(x,w_1,w_2)= (x,-w_1,-w_2)$, which is induced by the involution on $M$ fixing that connected component. This is $\kappa_*$ if $j=1,3,4,5,6$, or $(\kappa\circ \iota_1)_*$ if $j=2$, or $(\kappa\circ \iota_2)_*$ if $j=7,8,9,10$. This action commutes with $\iota_*$. Thus $\nu(N_j)=\nu(N_j')/\kappa_*\cong (N_j^1 \times \CC^2)/\la \iota_*, \hat{\kappa} \ra \cong (N_j^1 \times \CC^2/\ZZ_2)/\iota_*$, and 
there is a $2:1$ cover $N_j^1\times (\C^2/\Z_2) \to (N_j^1\times \C^2/\ZZ_2)/ \iota_*$.

The resolution of $N_j^1\times \CC^2/\Z_2$ is $N_j^1\times Y$. 
The Deck transformation $\iota_*$ lifts to
 $N_j^1\times Y$ by means of the expression $\iota_*(x,[(w_1,w_2),[W_1:W_2]])=(\iota(x),[(w_1,-w_2), [W_1:-W_2]])$. The resolution $P(N_j)$ of $\nu(N_j)$ is the quotient $(N_j^1\times Y)/\iota_*$. Its exceptional divisor is
$Q(N_j)=(N_j^1\times \CP^1)/\iota_* \subset (N_j^1\times Y)/\iota_*$, which inherits the product orientation from $N_j^1\times \CP^1$, where $\CP^1$ is oriented by the Fubini-Study form. It turns out that $Q(N_j)$ is isomorphic to the trivial bundle because it is homotopic to it via
\begin{equation} \label{eqn:homotopy}
E_t=N_j^1\times \CP^1/(x,[W_1:W_2])\sim (\iota(x),[W_1:e^{\pi i t}W_2]), \qquad t \in [0,1].
\end{equation}
To obtain $\widetilde{M}$, the authors of \cite{JK} replace a neighborhood of $N_j$ in $X$ with a neighborhood of $Q(N_j)$ in $P(N_j)$. Therefore, $E_j=Q(N_j)$. The identification between these pieces is the composition of the natural projection $P(N_j)\to \nu(N_j)$, a rescaling of the fibers of $\nu(N_j)$, and a 
 modification of the exponential map that coincides with it over the zero section, is orientation-preserving, and is compatible with the involution (see \cite[Definition 3.2, equation (6.1)]{JK}).

\section{Nonformality of the constructed manifold} \label{sec:nonformality}

\subsection{Cohomology groups} \label{sec:cgroups}

We introduce some notations. We fix small tubular neighborhoods $O_j'\subset O_j''$ of $N_j'$ on $M'$, and denote by $\pi_j \colon O_j'' \to N_j'$ the nearest point projection. We define $O_j=q'(O_j)$, and $O_j^2=q(O_j'')$ where $q'\colon M' \to X$ is the quotient projection. To ease notations, we also set  $\pi_j \colon O_j^2 \to N_j$ the nearest point projection. In addition, we let $\mathrm{pr}_j= \pi_j \circ \rho \colon \rho^{-1}(O_j^2) \to N_j$. From the discussion in section \ref{sec:resolution} we deduce that it is a smooth map.

According to \cite[Proposition 6.1]{JK}, there are group isomorphisms 
\begin{equation} \label{eqn:cohom}
H^k(\widetilde{M})\cong H^k(X)\oplus \oplus_{j=1}^{10} H^{k-2}(N_j)\otimes \la \mathbf{x}_j\ra.
\end{equation}
The cohomology algebra $H^*(X)$ is computed from the complex $(\Omega(M')^{\kappa},d)$, and an average argument shows $H^*(X)=H^*(M')^{\kappa}$. There is an inclusion $H^k(X)\subset H^k(\widetilde{M})$ is determined by $\rho^*$ (see Lemma \ref{lem:smothness}). 
Each $\mathbf{x}_j$ has degree two, and is represented by the Thom class $[\tau_j]$ of the submanifold $E_j\subset \widetilde{M}$, oriented as in section \ref{sec:resolution}. Recall that the Thom class of $E_j$ is the generator of the compactly-supported cohomology group
$H_c^2(\rho^{-1}(O_j))$ that integrates to $1$ over each fiber of the normal bundle of $E_j$ inside $\widetilde{M}$ (see \cite[Proposition 6.18]{Bott-Tu}).  In addition, a closed form in $H^*(\widetilde{M})$ that maps to $[\alpha] \otimes \mathbf{x}_j\in H^*(N_j)\otimes  \mathbf{x}_j$ is $\mathrm{pr}_j^*(\alpha) \wedge \tau_j$.


Observe that given $\alpha \in \Omega^k(X)$, the restriction $\rho^*(\alpha)|_{\widetilde{M}-\cup_j E_j}$ is a smooth form because $\rho \colon \widetilde{M}-\cup_j E_j \to X-\cup_j N_j$ is a diffeomorphism. Next Lemma describes how to find a representative $\alpha_s$ of a cohomology class $[\alpha] \in H^*(X)$ whose pullback to $\widetilde{M}$ is smooth.

\begin{lem} \label{lem:smothness}
For any $1\leq j \leq 10$, if $\alpha \in \Omega^k(X)$ is closed on $O_j^2$, there is $\alpha_j \in \Omega^k(X)$ such that
\begin{enumerate}
    \item The forms $\alpha_j$ and $\alpha$ coincide on $X-O_j^2$. In addition, 
    $\alpha_j - \alpha=d\beta_j$ where $\beta_j \in \Omega^{k-1}(X)$ is supported on $O_j^2$.
    \item On $O_j$ we have $\alpha_j = \pi_j^*(\alpha_j|_{N_j})$. In particular, $\rho^*(\alpha_j)$ is smooth on $\rho^{-1}(O_j)$ because $\rho^*(\alpha_j)= \mathrm{pr}_j^*(\alpha_j|_{N_j})$ there.
\end{enumerate}
Therefore, given $[\alpha]\in H^k(X)$ there is a representative $\alpha_s \in \Omega^k(X)$ such that  $\rho^*(\alpha_s)$ is smooth and
 $\rho^*(\alpha_s)|_{\rho^{-1}(O_j)}=\mathrm{pr}_j^*(\alpha_s|_{N_j})$ for every $1\leq j \leq 10$.
\end{lem}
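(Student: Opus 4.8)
The plan is to exploit the fact that the nearest-point projection $\pi_j\colon O_j^2\to N_j$ exhibits the tubular neighborhood $O_j^2$ as deformation retracting onto its core $N_j$. Writing $i_j\colon N_j\hookrightarrow O_j^2$ for the inclusion of the zero section, the straight-line scaling of the normal fibers is a smooth ($\kappa$-equivariant, hence orbifold) homotopy between $i_j\circ\pi_j$ and the identity. By the homotopy invariance of de Rham cohomology (see \cite{Bott-Tu}) this produces a chain homotopy operator $K_j\colon\Omega^\bullet(O_j^2)\to\Omega^{\bullet-1}(O_j^2)$ satisfying $\mathrm{id}-\pi_j^*\,i_j^*=dK_j+K_jd$. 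Since $\alpha$ is closed on $O_j^2$ and $i_j^*\alpha=\alpha|_{N_j}$, this gives the key identity $\alpha=\pi_j^*(\alpha|_{N_j})+dK_j\alpha$ on $O_j^2$.

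To turn this retraction formula into a global modification I would introduce a cutoff function $\lambda_j\colon X\to[0,1]$ that equals $1$ on $O_j$ and is compactly supported inside $O_j^2$, and set
\[
\alpha_j=\alpha-d(\lambda_j\,K_j\alpha),\qquad \beta_j=-\lambda_j\,K_j\alpha .
\]
Then $\beta_j$ extends by zero to a smooth form on $X$ supported in $O_j^2$, and $\alpha_j-\alpha=d\beta_j$, giving (1). Outside $\supp(\lambda_j)$ one has $\alpha_j=\alpha$, while on $O_j$, where $\lambda_j\equiv 1$, the identity above yields $\alpha_j=\alpha-dK_j\alpha=\pi_j^*(\alpha|_{N_j})$. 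Restricting to the zero section and using $i_j^*\pi_j^*=\mathrm{id}$ gives $\alpha_j|_{N_j}=\alpha|_{N_j}$, so on $O_j$ we indeed have $\alpha_j=\pi_j^*(\alpha_j|_{N_j})$, which is (2). The smoothness of $\rho^*(\alpha_j)$ on $\rho^{-1}(O_j)$ then follows by pulling back through $\rho$: one gets $\rho^*(\alpha_j)=\rho^*\pi_j^*(\alpha_j|_{N_j})=\mathrm{pr}_j^*(\alpha_j|_{N_j})$, which is smooth because $\mathrm{pr}_j=\pi_j\circ\rho$ is smooth (Section \ref{sec:resolution}) and $\alpha_j|_{N_j}$ is a smooth form on $N_j$.

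For the final global statement I would choose the tubular neighborhoods $O_1^2,\dots,O_{10}^2$ pairwise disjoint, which is possible since the $N_j$ are disjoint, and apply the above construction for each $j$ at once. Given a globally closed $\alpha$ representing $[\alpha]\in H^k(X)$, set $\alpha_s=\alpha-\sum_{j=1}^{10}d(\lambda_j K_j\alpha)$. Because the corrections are supported in the disjoint sets $O_j^2$, on each $O_j$ only the $j$-th term is active, so $\alpha_s=\pi_j^*(\alpha_s|_{N_j})$ there; moreover $\alpha_s$ is closed and cohomologous to $\alpha$. Thus $\rho^*(\alpha_s)$ equals $\mathrm{pr}_j^*(\alpha_s|_{N_j})$ on each $\rho^{-1}(O_j)$ and equals the pullback of a smooth form under the diffeomorphism $\rho\colon\widetilde M-\cup_jE_j\to X-\cup_jN_j$ elsewhere; since these descriptions agree on the overlaps, $\rho^*(\alpha_s)$ is a globally smooth form.

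The only genuinely delicate point — and the reason the lemma is not a triviality — is the smoothness of $\rho^*(\alpha_s)$ across the exceptional divisors $E_j$. A generic closed form pulls back with a singularity there because $\rho$ collapses $E_j=Q(N_j)$ and the resolution replaces the $\C^2/\Z_2$ fibers by $Y$. The construction sidesteps this by arranging that, near $E_j$, $\alpha_s$ is a pullback \emph{from the base} $N_j$ under $\pi_j$; its $\rho$-pullback is then the pullback under the honest smooth map $\mathrm{pr}_j=\pi_j\circ\rho$, whose smoothness across $E_j$ is exactly what was established in Section \ref{sec:resolution}. Two routine checks that I would carry out are that the retraction, cutoff, and homotopy operator can all be taken $\kappa$-invariant so that the whole argument takes place on $\Omega^\bullet(X)=\Omega^\bullet(M')^\kappa$, and that $\lambda_j\,K_j\alpha$ remains smooth up to $\del O_j^2$ so that $\beta_j$ extends smoothly by zero.
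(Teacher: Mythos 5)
Your proposal is correct and follows essentially the same route as the paper: the paper's proof applies the Poincar\'e lemma on the tubular neighborhood to produce a $\kappa$-invariant primitive $\theta_j$ of $\pi_j^*(\alpha|_{N_j})-\alpha$ and sets $\alpha_j=\alpha+d(h_j\theta_j)$, which is exactly your $\alpha_j=\alpha-d(\lambda_j K_j\alpha)$ with $\theta_j=-K_j\alpha$ made explicit via the homotopy operator of the retraction. The only difference is that you spell out the equivariance and the final gluing over the disjoint neighborhoods, which the paper leaves implicit.
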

\begin{proof} 
The form $\beta=\pi_j^{*}(\alpha|_{N_j'})\in \Omega^2(O_j'')$ is $\kappa$-invariant because $\pi_j \circ \kappa= \pi_j$. In addition, it is closed and satisfies $\beta|_{N_j'}=\alpha|_{N_j'}$.  The Poincaré's Lemma allows to find a form $\theta_j$ on $O_j''$ such that $d\theta_j= (\beta - \alpha)|_{O_j''}$. 
Indeed, since $\beta$ and $\alpha$ are $\kappa$-invariant, we can assume that $\theta_j$ is. Let $h_j$ be a $\kappa$-invariant bump function which equals $0$ on $M'-O_j''$ and $1$ on $O_j'$. Then $h_j \theta_j\in \Omega^{k-1}(X)$ is supported on $O_j^2$ and 
$
\alpha_j= \alpha + d(h_j \theta_j)
$
satisfies the required properties.
\end{proof}

 \begin{remark}
In the proof of 
 \cite[Proposition 6.1]{JK}, and following the notation in our paper, they use 
that the bundles $Q(N_j)\to N_j$ are trivial. Their justification is that the normal bundle $\nu(N_j)$ is trivial as a real bundle (see their Remark 2.14). There could be a gap between these statements, because the chosen complex structure might not be constant in the trivialization suggested by the authors. The space of complex lines could then differ from fiber to fiber. This is why we showed in \eqref{eqn:homotopy} that $Q(N_j)$ is trivial.
 \end{remark}

\begin{proposition} \label{prop:cohomology-M}
The Betti numbers of  $\widetilde{M}$ are $(b_1,b_2,b_3)=(0,11,16)$.
The cohomology groups are:
\begin{align*}
H^2(\widetilde{M})=& \la dz_{1\bar{2}}+ dz_{\bar{1}2} \ra \oplus \la \mathbf{x}_1, \dots, \mathbf{x}_{10} \ra , \\
H^3(\widetilde{M})=&
dt\wedge \la idz_{1\bar{1}},i dz_{2\bar{2}}, i dz_{3\bar{3}}, i(dz_{1\bar{2}}- dz_{\bar{1}2}) \ra \oplus \la dz_{123}+ dz_{\bar{1}\bar{2}\bar{3}},  dz_{12\bar{3}}+ dz_{\bar{1}\bar{2}3} \ra \\
&\oplus \la dy_3 \otimes \mathbf{x}_1, dy_3 \otimes \mathbf{x}_2, dx_3 \otimes \mathbf{x}_3, \dots, dx_3 \otimes \mathbf{x}_{10} \ra.
\end{align*}
\end{proposition}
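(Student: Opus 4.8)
The plan is to combine the group isomorphism \eqref{eqn:cohom}, namely $H^k(\widetilde{M})\cong H^k(X)\oplus\bigoplus_{j=1}^{10}H^{k-2}(N_j)\otimes\la\mathbf{x}_j\ra$, with explicit computations of the invariant de Rham cohomology of the flat pieces. In the degrees we need, $b_1$ comes only from $H^1(X)$, $b_2$ from $H^2(X)$ together with the ten copies of $H^0(N_j)$, and $b_3$ from $H^3(X)$ together with the ten copies of $H^1(N_j)$. So everything reduces to computing $H^*(X)$ and the low-degree cohomology of the singular components.

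First I would compute $H^*(X)=H^*(M')^{\kappa}$, using the identification already recorded above. Since $M'$ is flat and equal to the finite free quotient $\mathbb{T}/\la F,\iota_1,\iota_2\ra$ of $\mathbb{T}=\R/4\Z\times T^6$, Hodge theory together with an averaging argument identifies $H^*(M';\C)$ with the space of constant-coefficient forms on $\mathbb{T}$ that are invariant under $F,\iota_1,\iota_2$, and the complex dimension of this space equals the real Betti number because the action is by real automorphisms. Decomposing into monomials in $dt,dz_k,d\bar{z}_k$, the map $F$ acts with eigenvalues $1$ on $dt$, $i$ on $dz_1,dz_2$, $-1$ on $dz_3$, and the conjugate eigenvalues on the $d\bar{z}_k$, whereas $\iota_1,\iota_2$ both act by $-1$ on $dz_1,d\bar{z}_1,dz_2,d\bar{z}_2$ and trivially otherwise. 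Imposing invariance selects, in degree $1$, only $dt$; in degree $2$, the five forms $dz_{1\bar{1}},dz_{2\bar{2}},dz_{3\bar{3}},dz_{1\bar{2}},dz_{\bar{1}2}$; and in degree $3$, the wedges of $dt$ with these five $2$-forms together with $dz_{123},dz_{\bar{1}\bar{2}\bar{3}},dz_{12\bar{3}},dz_{\bar{1}\bar{2}3}$.

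Next I would impose $\kappa$-invariance. As $\kappa$ negates $dt$ and interchanges $dz_k\leftrightarrow d\bar{z}_k$, it acts by $-1$ on each $dz_{k\bar{k}}$ and swaps $dz_{1\bar{2}}\leftrightarrow dz_{\bar{1}2}$, $dz_{123}\leftrightarrow dz_{\bar{1}\bar{2}\bar{3}}$ and $dz_{12\bar{3}}\leftrightarrow dz_{\bar{1}\bar{2}3}$, with an extra sign on the $dt$-factor. Diagonalizing, the $(+1)$-eigenspaces are $\la dz_{1\bar{2}}+dz_{\bar{1}2}\ra$ in degree $2$, zero in degree $1$ (as $dt$ is anti-invariant), and the six-dimensional space $dt\wedge\la dz_{1\bar{1}},dz_{2\bar{2}},dz_{3\bar{3}},dz_{1\bar{2}}-dz_{\bar{1}2}\ra\oplus\la dz_{123}+dz_{\bar{1}\bar{2}\bar{3}},dz_{12\bar{3}}+dz_{\bar{1}\bar{2}3}\ra$ in degree $3$; the factors of $i$ in the statement merely rescale these to real forms. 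For the singular components, each $N_j\cong N_j'=N_j^1/\iota$ with $N_j^1\cong T^3$ and $\iota$ acting freely, so the same invariant-forms principle applies: in the coordinates of \eqref{eqn:N1}--\eqref{eqn:N7-10} the involution negates the two transverse coordinates and translates the remaining one, giving Betti numbers $(1,1,1,1)$ with $H^1(N_j)=\la dy_3\ra$ for $j=1,2$ and $\la dx_3\ra$ for $3\le j\le 10$. Substituting into \eqref{eqn:cohom} then produces the stated $H^2(\widetilde{M})$ and $H^3(\widetilde{M})$, and the dimension count $(b_1,b_2,b_3)=(0,\,1+10,\,6+10)=(0,11,16)$; the vanishing $b_1=0$ is in any case forced by simple connectedness.

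The main obstacle will be the careful bookkeeping of three commuting finite-group actions on the complexified exterior algebra at once: one must correctly handle the genuinely complex order-$4$ eigenvalues of $F$, verify that passing to $\C$-coefficients does not alter the Betti numbers, and then diagonalize the $\kappa$-swap to extract real generators. There is essentially no analysis to do, since every form in sight is parallel, hence automatically closed and harmonic, so the only real work is the organized enumeration.
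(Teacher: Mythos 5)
Your proposal is correct and follows essentially the same route as the paper: both reduce to the isomorphism \eqref{eqn:cohom} and then compute $H^*(X)$ as the constant-coefficient forms on the flat torus invariant under the group generated by $F$, $\iota_1$, $\iota_2$, $\kappa$, together with the observation that each $N_j$ is a flat quotient of $T^3$ with $b_1=1$. The only (immaterial) difference is organizational — you enumerate monomial eigenvectors on the full complexified exterior algebra, while the paper filters by Hodge type $(p,q)$ with $p\ge q$ and takes real parts.
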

\begin{proof}
We first compute the cohomology groups of $M$. We observe,
$$
H^k(M)=H^k(\mathbb{T})^{F}=dt\wedge H^{k-1}(T^6)^f \oplus H^k(T^6)^f.
$$
It is then clear that $H^1(M)=\la dt \ra$; in addition, $\kappa^*(dt)=-dt$.
For the remaining degrees, we
analyze the map $f^* \colon H^k(T^6, \CC) \to H^k(T^6, \CC) $ and then we take real parts to find $H^k(T^6)^f$. That is, since $f$ is a holomorphic map, it preserves each of the subspaces $H^{p,q}(T^6)$ and given a $(p,q)$-form $\alpha$ with $p>q$, the map $f^*$ fixes 
$\alpha + \overline{\alpha}$  if and only if it fixes $\alpha$. Then, it suffices to study the action of $f^*$ on $H^{p,q}(T^6)$ with $p\geq q$.

The action of $f^*$ in terms of the basis $(dz_{13}, dz_{23}, dz_{12})$ of $H^{2,0}(T^6,\CC)$ is $\diag(-i,-i,-1)$. Consider the basis of $H^{1,1}(T^6,\C)$ given by $(dz_{1\bar{1}}, dz_{2\bar{2}},dz_{3\bar{3}}, dz_{1\bar{2}}, dz_{\bar{1}2},dz_{1\bar{3}}, dz_{2\bar{3}} ,dz_{\bar{1}3}, dz_{\bar{2}3})$, the action of $f^*$ is $\diag(1,1,1,1,1,-i,-i,i,i)$. Hence, 
$
H^2(T^6)^f=
\la idz_{1\bar{1}},idz_{2\bar{2}},idz_{3\bar{3}}, dz_{1\bar{2}}+ dz_{\bar{1}2}, i(dz_{1\bar{2}}- dz_{\bar{1}2}) \ra.
$
All these forms are invariant under $\iota_1$ and $\iota_2$. However, $\kappa$ acts as $\diag(-1,-1,-1,1,-1)$.

The map $f^*$ acts trivially on $H^{3,0}(T^6)$. Regarding $H^{2,1}(T^6,\CC)$, we consider the basis $(dz_{12\bar{3}},dz_{13\bar{2}}, dz_{23\bar{1}})$. Then, $f^*$ acts as $\diag(1,-1,-1)$. Thus,
 $$
 H^3(T^6)^f=
\la  dz_{123}+ dz_{\bar{1}\bar{2}\bar{3}}, i(dz_{123}- dz_{\bar{1}\bar{2}\bar{3}}),  dz_{12\bar{3}}+ dz_{\bar{1}\bar{2}{3}}, i(dz_{12\bar{3}}- dz_{\bar{1}\bar{2}{3}})\ra.
$$
These forms are fixed by $\iota_1$ and $\iota_2$. In addition, the action of $\kappa$ is $\diag(1,-1,1,-1)$. 
Finally, one deduces the result from our previous calculations, the equality $H^k(X)=H^k(M)^{\la \kappa, \iota_1,\iota_2 \ra}$ and equation \eqref{eqn:cohom}.
\end{proof}

\begin{remark}
As \cite[Figure 12.3]{Joyce2} shows, there is no holonomy $\Gtwo$ manifold with $(b_2,b_3)=(11,16)$ within the examples provided in section 12 of that book. The examples in \cite[section 8]{Kovalev} have $b_2 \leq 9$, and those in \cite[section 6]{KL} with $b_2=11$ have $b_3\geq 66$. 
\end{remark}

\subsection{Algebra structure} \label{sec:prod}

We now focus on the product under the isomorphism provided by equation \eqref{eqn:cohom}. This discussion is based on \cite[Proposition 6.4]{LMM} and uses the notations introduced in section \ref{sec:resolution}. Similar computations have been done in \cites{FFKM,Taimanov}.

We first claim that the Thom form of the normal bundle of $N_j^1\times \CP^1 \subset N_j^1 \times Y$ is 
 the extension of
\begin{equation} \label{eqn:Thom}
\tau_j= \frac{1}{\pi i} d(b(r^2)\partial \log(r^2))=\frac{2}{\pi i}b'(r^2)rdr \wedge \partial \log(r^2)+ \frac{b(r^2)}{\pi i}\overline{\partial}{\partial}\log r^2 .
\end{equation}
where $r$ is the radial coordinate on $\C^2$, and $b(s)$ is an increasing function that equals $-1$ on $s\leq \e^2$ and $0$ on $s\geq 4\e^2$. Observe that $\tau_j$ is smooth because the second term is proportional to the pullback of the Fubini-Study form on $\CP^1$.

 To verify this claim, consider a unit-length vector $(u_1,u_2) \in \C^2$ and the holomorphic embedding to a fiber of the tautological line bundle $F=\{ (w,[u_1:u_2]), \, w\in [u_1:u_2] \}$, given by $e \colon \C \to F$, $e(\lambda)=(\lambda u_1, \lambda u_2, [u_1:u_2])$. This induces an orientation-preserving embedding onto the fiber $F_p=\{p\}\times F/\Z_2$ of the bundle $N_j^1 \times Y \to N_j^1 \times \CP^1$ that we denote by $e_p \colon \C/\Z_2 \to F_p$.

We observe that 
$
e^*(\partial \log (r^2))= \partial(\log |\lambda|^2)
= \frac{d\lambda}{\lambda}
$, so that
$
e^*\tau_j= \frac{1}{\pi i} b'(|\lambda|^2) d\bar{\lambda}\wedge d \lambda
$. Since $e^*\tau_j$ vanishes when $|\lambda| \notin [\e,2\e]$ we have:
\begin{align*}
\int_{\C/\Z_2}{e_p^*(\tau_j)}=&
\frac{1}{2}\int_{\C}{e^*(\tau_j)}= 
\frac{1}{2\pi i}\int_{|\lambda| \in [\e,2\e]}{d\left( \dfrac{b(|\lambda|^2)d\lambda}{\lambda} \right)} \\
=& \frac{1}{2\pi i} \int_{|\lambda|=2\e}{\dfrac{b(|\lambda|^2)d\lambda}{\lambda} } - \frac{1}{2\pi i} \int_{|\lambda|=\e}{\dfrac{b(|\lambda|^2)d\lambda}{\lambda} } = \frac{1}{2\pi i} \int_{|\lambda|=\e}{ \frac{d\lambda}{\lambda}}= 1.
\end{align*}

This proves that $\tau_j$ integrates to $1$ over the fibers of the normal bundle, and therefore, it is the Thom class of that bundle. Note in addition, that the Thom class of the tautological line bundle over $\CP^1$ is $\frac{1}{2}\tau_j$.

The form $\tau_j$ is $\iota_*$ invariant, and hence, the Thom form of $Q(N_j)\subset P(N_j)$ is the pushforward of $\tau_j$. We also denote it by $\tau_j$. Since the second term in equation \eqref{eqn:Thom} is proportional to the pullback of the Fubini-Study form of $\CP^1$, the form $\tau_j^2$ vanishes on a neighborhood of $Q(N_j)$. Hence, it induces by pushforward a form $\rho_*(\tau_j^2)$ on $X$ with compact support around $N_j$. Let $\mathrm{Th}[N_j]$ be the Thom class of $N_j\subset X$, oriented by $\varphi|_{N_j}$, then
\begin{equation}\label{eqn:sq-thom}
[\rho_*(\tau_j^2)]= -2\mathrm{Th}[N_j] \in H^*(X),
\end{equation}
 because 
$
\int_{\CC^2/\Z_2}{\rho_*(\tau_j^2)}= \int_{Y}{\tau_j^2}= \int_{\CP^1} \tau_j= \frac{1}{\pi i} \int_{\CC \mathbb{P}^1}  \partial \overline{\partial} \log (r^2)=-2
$ (or alternatively, $\int_{Y}{\tau_j^2}= [\CP^1][\CP^1]=-2$).
In addition, when $j\neq k$ the supports of $\tau_j$ and $\tau_k$ are disjoint; hence $\tau_j \wedge \tau_k = 0$. 
Finally, we let $[\alpha]\in H^k(X)$, by Lemma \ref{lem:smothness}, we can assume that it satisfies $\alpha=\pi_j^*(\alpha|_{N_j})$ on $O_j$; then
$
\rho^*(\alpha)\wedge \tau_j= \mathrm{pr}_j^*(\alpha|_{N_j})\wedge \tau_j.
$
Under the isomorphism in equation \eqref{eqn:cohom}, our discussion yields the following product rules:
\begin{align}
\mathbf{x}_j^2=&-2\mathrm{Th}[N_j], \qquad \qquad \qquad \qquad  \mathbf{x}_{j}\cdot \mathbf{x}_{k}=0 \mbox{ if } j\neq k, \label{eqn:prod-1} \\
[\alpha] \cdot \mathbf{x}_{j}=& [\alpha|_{N_j}]\otimes \mathbf{x}_j, \mbox{ if } [\alpha] \in H^*(X). \label{eqn:prod-2}
\end{align}

Since the Thom class of $N_j$ (viewed in $H^*(X)$ rather than $H^*_c(O_j)$) coincides with its Poincaré Dual, we focus on 
 finding $PD[N_j]\in H^4(X)$.

\begin{lem} \label{lem:PDuals}
Denote $\b_1=i dt\wedge (dz_{123} - dz_{\bar{1}\bar{2}\bar{3}})$, and  $\b_2=idt \wedge (dz_{12\bar{3}} - dz_{\bar{1}\bar{2}3})$. Then 
$$
PD[N_1]=PD[N_2]=\beta_1+\beta_2, \qquad 
PD[N_3]=PD[N_j]=(\beta_1 - \beta_2)/2, \qquad 4 \leq j \leq 10.
$$
\end{lem}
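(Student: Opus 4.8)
The plan is to use the characterization of the Poincaré dual by the intersection pairing. Since the fixed loci of $\langle \kappa,\iota_1,\iota_2\rangle$ have codimension $\geq 4$, the orbifold $X$ is a rational homology manifold, so $H^*(X;\R)=H^*(M)^{\langle\kappa,\iota_1,\iota_2\rangle}$ satisfies Poincaré duality, and $PD[N_j]\in H^4(X)$ is the unique class with $\int_X PD[N_j]\wedge\omega=\int_{N_j}\omega|_{N_j}$ for all closed $\omega\in\Omega^3(X)$. I would represent every class by a $\langle\kappa,\iota_1,\iota_2\rangle$-invariant form on $M$ and evaluate orbifold integrals as $\int_X=\tfrac18\int_M$, using that $\langle\kappa,\iota_1,\iota_2\rangle\cong\Z_2^3$ has order $8$, and similarly $\int_{N_j}=\tfrac12\int_{N_j^1}$ since $N_j^1\to N_j$ is $2{:}1$.

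First I would record the splittings coming from the proof of Proposition \ref{prop:cohomology-M}. Writing $\mu_1=dz_{123}+dz_{\bar1\bar2\bar3}$ and $\mu_2=dz_{12\bar3}+dz_{\bar1\bar23}$, the group $H^3(X)$ decomposes as the $dt$-part $dt\wedge\langle idz_{1\bar1},idz_{2\bar2},idz_{3\bar3},i(dz_{1\bar2}-dz_{\bar12})\rangle$ together with $\langle\mu_1,\mu_2\rangle$, while $H^4(X)$ decomposes as $\langle\beta_1,\beta_2\rangle$ (the $dt\wedge H^3(T^6)$ part, which is exactly the $\kappa$-eigenvalue $-1$ locus) together with the image $W$ of $H^4(T^6)^f$. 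A degree count on the wedge products then shows the intersection pairing is block anti-diagonal: $dt\wedge dt=0$ makes $\langle\beta_1,\beta_2\rangle$ pair trivially with the $dt$-part, and a $7$-form on $T^6$ vanishes, making $W$ pair trivially with $\langle\mu_1,\mu_2\rangle$. By nondegeneracy, $\langle\beta_1,\beta_2\rangle$ is paired nondegenerately with $\langle\mu_1,\mu_2\rangle$, and $W$ with the $dt$-part.

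The key geometric input is that each $N_j$ lies in a level set $\{t=0\}$ or $\{t=1/2\}$, so $dt|_{N_j}=0$ and hence $\omega|_{N_j}=0$ for every $\omega$ in the $dt$-part of $H^3(X)$. Writing $PD[N_j]=p_j+w_j$ with $p_j\in\langle\beta_1,\beta_2\rangle$ and $w_j\in W$, the relation $\int_X w_j\wedge\omega=\int_{N_j}\omega|_{N_j}=0$ for all $\omega$ in the $dt$-part, together with nondegeneracy of the $W$--$dt$ block, forces $w_j=0$. Thus $PD[N_j]=a_j\beta_1+b_j\beta_2$, and it remains to determine $a_j,b_j$ by pairing against $\mu_1,\mu_2$. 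Here a short computation (using $dz_k\wedge d\bar z_k=-2i\,dx_k\wedge dy_k$ and $\mathrm{vol}(T^6)=1$) gives $\beta_1\wedge\mu_1=16\,dt\wedge dV$ and $\beta_2\wedge\mu_2=-16\,dt\wedge dV$ with the other two products zero, so the Gram matrix is diagonal with entries $\int_X\beta_1\wedge\mu_1=2$ and $\int_X\beta_2\wedge\mu_2=-2$. On the other side, since $\mu_1=2\re(dz_{123})$ restricts to $2\varphi$ on $N_j$ (as $dt|_{N_j}=0$ and $\Theta=dz_{123}$), one gets $\int_{N_j}\mu_1|_{N_j}=\mathrm{vol}(N_j^1)$, equal to $2$ for $j=1,2$ and $1$ for $3\leq j\leq10$; and $\mu_2=2\re(dz_{12\bar3})$ restricts, via the coordinates and orientations in \eqref{eqn:or-1}--\eqref{eqn:or-7-10}, to give $\int_{N_j}\mu_2|_{N_j}=-2$ for $j=1,2$ and $+1$ for $3\leq j\leq10$. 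Solving the diagonal system yields $a_j=b_j=1$ for $j=1,2$ and $a_j=\tfrac12$, $b_j=-\tfrac12$ for $3\leq j\leq10$, which is the claim.

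The main obstacle is purely bookkeeping, but genuinely delicate: getting the normalizations and orientations right. One must track the factor $\tfrac18$ from the orbifold cover, the factor $\tfrac12$ from $N_j^1\to N_j$, and especially the orientation signs — for $N_2$ the form $\varphi|_{N_2^k}=-2\,dx_1\wedge dx_2\wedge dy_3$ reverses the coordinate orientation, so the sign of $\int_{N_2}\mu_2|_{N_2}$ comes from the reversed orientation rather than from the sign of the restricted form, and the two effects must not be double counted. I would also verify that $N_4,N_5,N_6$ and $N_8,N_9,N_{10}$ produce the same restrictions as $N_3$ and $N_7$ respectively, which is immediate since only the constant imaginary parts $\e_1,\e_2$ differ and these affect neither $dz_k=dx_k$ nor $dz_k=i\,dy_k$.
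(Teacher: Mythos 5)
Your proposal is correct and follows essentially the same route as the paper: both identify $PD[N_j]$ by computing the intersection pairing against $H^3(X)$ (with the $1/8$ orbifold factor and the $1/2$ covering factor for $N_j^1\to N_j$) and the integrals $\int_{N_j}\alpha$ in the given coordinates and orientations; your Gram-matrix entries $\pm 2$ and the values of $\int_{N_j}\mu_i$ all agree with the paper's computation. The only presentational difference is that you make explicit the vanishing of the component in the image of $H^4(T^6)^f$ and the nondegeneracy argument, which the paper leaves implicit by pairing a candidate in $\langle\beta_1,\beta_2\rangle$ against an arbitrary element of $H^3(X)$.
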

\begin{proof}
Following the proof of Proposition \ref{prop:cohomology-M}, we obtain that $\b_1,\beta_2\in H^4(X)$. Let $\alpha \in H^3(X)$; then, $\alpha=dt\wedge\beta + \lambda_1(dz_{123} + dz_{\bar{1}\bar{2}\bar{3}})+ \lambda_2( dz_{12\bar{3}} + dz_{\bar{1}\bar{2}{3}})$, where $f^*\beta=\beta$ and $\kappa^*\beta=-\beta$.  Let $\mu_1,\mu_2\in \R$, since $M-\cup_{j,k} N_j^k \to X-\cup_{j}N_j$ is an $8:1$ cover we have
$$
\int_X {\alpha \wedge (\mu_1\beta_1 + \mu_2\beta_2)}=
\frac{1}{8} \int_{M} {-2i(\mu_1 \lambda_1- \mu_2 \lambda_2)dt\wedge dz_{1\bar{1}2\bar{2}3\bar{3}}} =
2(\mu_1 \lambda_1- \mu_2 \lambda_2).
$$

Using the coordinates described in \eqref{eqn:N1}, we compute $\alpha|_{N_1^k}= 4(\lambda_1-\lambda_2)dx_{12} \wedge dy_3$. Similarly, from equation \eqref{eqn:N2} we obtain $\alpha|_{N_2^k}= -4(\lambda_1-\lambda_2)dx_{12}  \wedge dy_3$. Taking into account the induced orientations \eqref{eqn:or-1}, \eqref{eqn:or-2} of $N_j^k$ we deduce
$\int_{N_j}\alpha=\frac{1}{2}\int_{N_j^1}{\alpha}=2(\lambda_1 - \lambda_2)$ for $j=1,2$. In addition, if $3\leq j \leq  6$ then $\alpha|_{N_j^k}=2(\lambda_1+\lambda_2)dx_{123}$ and if $7\leq j \leq 10$ then $\alpha|_{N_j^k}=-2(\lambda_1+\lambda_2) dy_{12} \wedge dx_3$. Of course, we used the coordinates in 
\eqref{eqn:N3-6}, \eqref{eqn:N7-10}.
Equations \eqref{eqn:or-3-6}, \eqref{eqn:or-7-10} ensure
$\int_{N_j}\alpha=\frac{1}{2}\int_{N_j^1}{\alpha}=(\lambda_1 + \lambda_2)$ for $j\geq 3$. The statement follows from these calculations.
\end{proof}

\subsection{A non-vanishing triple Massey product}

We begin by recalling the definition of a triple Massey product (see \cite[Definition 2.89]{FOT} or \cite[Definition 6.1]{OT}). In the sequel, given an $m$-form $\beta$, we denote $
\hat{\beta}=(-1)^{m}\beta.
$
Consider cohomology classes $[\alpha_1],[\alpha_2],[\alpha_3]\in H^*$, we say that their triple Massey product $\la [\alpha_1] ,[\alpha_2],[\alpha_3]\ra$ is well-defined if
$$
[\alpha_1][\alpha_2]=0, \qquad  [\alpha_2][\alpha_3]=0. 
$$
If these conditions are satisfied, we pick forms $\xi_{12}$, $\xi_{23}$ with $d\xi_{12}= {\alpha}_1 \wedge \alpha_2$ and  $d\xi_{23}= {\alpha}_2 \wedge \alpha_3$. Then, 
\begin{equation} \label{eqn:closed-Massey}
    y= \xi_{12} \wedge \alpha_3 - \hat{\alpha}_1 \wedge  {\xi}_{23}
\end{equation}
is a closed form. Let $\mathcal{I}$ be the ideal in $H^*$ generated by $[\alpha_1]$ and $[\alpha_3]$, and denote the projection map by $\pi \colon H^* \to H^*/\mathcal{I}$.
The triple Massey product is defined as $\la [\alpha_1] ,[\alpha_2],[\alpha_3]\ra= \pi[y]$. We say that it vanishes if $\pi[y]=0$, that is, if $[y]\in \mathcal{I}$.

It is well-known that all triple Massey products vanish on a formal manifold (see for instance \cite[Proposition 2.90]{FOT}). The goal of this section is to prove that the triple Massey product 
$\la [\tau_1 + \tau_2], [\tau_7+ \tau_3], [\tau_7-\tau_3] \ra$
on $H^*(\widetilde{M})$ does not vanish (see Theorem \ref{theo:non-formal}). Note that it is well-defined because $[\tau_7]^2-[\tau_3]^2=-2\rho^*(\mathrm{Th}[N_7]-\mathrm{Th}[N_3])=-2\rho^*(PD[N_7]-PD[N_3])=0$ by Lemma \ref{lem:PDuals}.

In order to find a primitive of $\tau_7^2-\tau_3^2$, we first construct a form $\beta \in \Omega^3(M)$ with $d\beta=\upsilon_3^1-\upsilon_7^1$, where $\upsilon_j^1$ is a form representing the Thom class of $N_j^1$, $j=3,7$. The construction uses a cobordism between $N_3^1$ and $N_7^1$.

\begin{proposition}\label{prop:prim-M}
There are small tubular neighborhoods $V_3^1 ,V_7^1$ of $N_3^1, N_7^1$ and forms
 $\beta \in \Omega^3(M)$, $\upsilon_3^1, \upsilon_7^1 \in \Omega^4(M)$, 
such that
\begin{enumerate}
\item The forms $\upsilon_3^1$, $\upsilon_7^1$ are closed and supported on $V_3^1$ and $V_7^1$ respectively. The cohomology class $[\upsilon_j^1]$ is the Thom class of $N_j^1$.
\item The form $\beta$ vanishes in a neighborhood of the level set $t=1/2$ and $d\beta=\upsilon_3^1-\upsilon_7^1$. In particular, it is closed on $M-(V_3^1\cup V_7^1)$ and it satisfies
 $[\beta|_{N_1^k}]=-\frac{1}{2}[\varphi|_{N_1^k}]$,  $[\beta|_{N_2^k}]=\frac{1}{2}[\varphi|_{N_2^k}]$.
\end{enumerate}
\end{proposition}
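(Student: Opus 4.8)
The plan is to realise $\beta$ as a transgression form attached to an explicit oriented cobordism $W\subset M$ running from $N_3^1$ to $N_7^1$, engineered so that $W$ meets the slice $\{t=1/2\}$ only along its boundary; the restrictions $\beta|_{N_1^k}$ and $\beta|_{N_2^k}$ are then read off from the signed intersection numbers $W\cdot N_1^k$ and $W\cdot N_2^k$. Recall that $M$ is the mapping torus of $f$ over the $t$-circle $\R/\Z$, that $N_3^1=\{[1/2,x_1,x_2,x_3]\}$ sits over $t=1/2$, and that $N_7^1$ has third coordinate $x_3-i/4$. First I would flow $N_3^1$ once around the $t$-circle while dragging its third coordinate into the imaginary axis: choosing $g\colon[1/2,3/2]\to\R$ with $g(1/2)=0$ and $g(3/2)=1/4$, set
\[
W=\bigl\{\,[\,t,\ x_1,\ x_2,\ x_3+i\,g(t)\,]\ :\ t\in[1/2,3/2],\ x_1,x_2,x_3\in\R/\Z\,\bigr\}.
\]
The slice at $t=1/2$ is $N_3^1$, while at $t=3/2$ the gluing $[3/2,z]=[1/2,f^{-1}z]$ together with $f^{-1}[z]=[-iz_1,-iz_2,-z_3]$ turns the end into $N_7^1$. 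I would check that $W$ is an embedded $T^3\times[0,1]$, that its interior lies in $\{t\neq 1/2\}$, and that, suitably oriented, $\partial W=N_3^1-N_7^1$ for the $\varphi$-orientations of \eqref{eqn:or-3-6} and \eqref{eqn:or-7-10}. Geometrically this is exactly the oriented cobordism promised in the introduction.

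Next I would take Thom forms $\upsilon_3^1,\upsilon_7^1$ of $N_3^1,N_7^1$ supported in thin tubular neighbourhoods $V_3^1,V_7^1$. The crucial freedom is that a Thom form only has to integrate to $1$ on each normal $4$-disk, so I may push its bump strictly into $\{t>1/2\}$ for $N_3^1$ and into $\{t<1/2\}$ for $N_7^1$. I would then let $\beta$ be a Thom form of the oriented normal bundle of $W$, cut off along the $t$-direction by functions climbing from $0$ to $1$ precisely across the supports of $\upsilon_3^1$ and $\upsilon_7^1$; the standard fact that the differential of the Thom form of a manifold-with-boundary is the Thom form of its boundary then gives $d\beta=\upsilon_3^1-\upsilon_7^1$. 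Since these cut-offs are identically $0$ on a genuine neighbourhood of $\{t=1/2\}$, the form $\beta$ vanishes there, and being supported near $W$ it is closed on $M-(V_3^1\cup V_7^1)$. This establishes (1) and the first half of (2).

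The tori $N_1^k,N_2^k$ lie over $t=0$, hence avoid $V_3^1\cup V_7^1$, so $\beta|_{N_j^k}$ is closed and, as $H^3(N_j^k)\cong\R$, its class is detected by $\int_{N_j^k}\beta$, which equals the signed count of $W\cap N_j^k$. Working in the chart over $t=1$, where the slice $\{t=0\}$ reappears after one loop and $N_1^k,N_2^k$ become the diagonal and anti-diagonal tori $z_1=x_1(1\pm i)$ of \eqref{eqn:N1}--\eqref{eqn:N2}, I would verify that $W$ meets each of $N_1^1,N_1^2,N_2^1,N_2^2$ transversally in a single point, so each number is $\pm 1$. Using $\int_{N_j^k}\varphi=2$ (the volume computed at the end of Section~\ref{sec:sing}), the values $\int_{N_1^k}\beta=-1$ and $\int_{N_2^k}\beta=+1$ give exactly $[\beta|_{N_1^k}]=-\tfrac12[\varphi|_{N_1^k}]$ and $[\beta|_{N_2^k}]=\tfrac12[\varphi|_{N_2^k}]$, completing (2).

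The main obstacle I anticipate is the sign bookkeeping in this last step. The planar intersections in the $z_1$- and $z_2$-factors turn out to contribute the \emph{same} sign for $N_1$ and for $N_2$, so the required opposite signs of the two intersection numbers are forced entirely by the opposite $\varphi$-orientations recorded in \eqref{eqn:or-1} and \eqref{eqn:or-2}. One must therefore fix compatible conventions for the orientation of $M$ induced by $\varphi$, the boundary orientation on $W$, and the sign in $d\beta=\upsilon_3^1-\upsilon_7^1$, and check that together they yield $-1$ and $+1$ rather than the overall opposite. The relative sign---the ultimate source of the non-vanishing Massey product---is robust, being dictated by \eqref{eqn:or-1}--\eqref{eqn:or-2} independently of these global choices.
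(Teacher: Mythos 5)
Your proposal is correct and follows essentially the same route as the paper: the same cobordism $[t,[x_1,x_2,x_3+i\,g(t)]]$ from $N_3^1$ at $t=1/2$ to $N_7^1$ at $t=3/2$, the same cut-off Thom form of its normal bundle as the primitive $\beta$ with $d\beta=\upsilon_3^1-\upsilon_7^1$, and the same evaluation of $[\beta|_{N_j^k}]$ via the single transverse intersection point of the cobordism with each $N_j^k$ at $t=1$ (the paper computes the integral directly in the proof and records your intersection-number interpretation, including the sign analysis you describe, in the remark that follows).
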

\begin{proof}
For convenience, we use $t\in [1/2,3/2]$ instead of $t\in [0,1]$. We first find a cobordism on $M$ between $N_3^1$ and $N_7^1$. We view points in $N_3^1$ at the level set $t=1/2$ and we consider the coordinates in \eqref{eqn:N3-6} and the orientation in \eqref{eqn:or-3-6}. Instead, we view $N_7^1$ inside the level set $t=3/2$. That is, since
$[1/2,[iy_1,iy_2,x_3- i/4]]= [3/2,[-y_1,-y_2,-x_3+i/4]]$, we describe:
\begin{equation} \label{eqn:N7-1-new}
N_7^1=\{ [3/2,[x_1,x_2,x_3 + i/4]], \quad x_1,x_2,x_3 \in [-1/2,1/2] \}.
\end{equation}
Its orientation is given by $\varphi|_{N_7^1}=dx_{123}$.
Let $\mathsf{f}\colon [1/2,3/2]\to [0,3/4]$ be a non-decreasing function that equals $0$ if $t\leq 9/8$, and $1/4$ if $t\geq 5/4$. Define the embedding $\Psi\colon [1/2, 3/2] \times N_3^1 \to M$,
\begin{equation} \label{eqn:cob}
\Psi (t,[1/2,x_1,x_2,x_3])=[t,[x_1,x_2,x_3 + i\mathsf{f}(t)]].
\end{equation}
Then $C=\mathrm{Im}(\Psi)$ is a manifold with boundary, $ N_3^1\cup N_7^1$. We trivialize and orient the bundle $TC$ using the frame $(-\partial_t- \mathsf{f}'(t)\partial_{y_3}, \partial_{x_1}, \partial_{x_2}, \partial_{x_3} )$. The normal bundle $\nu(C)$ is then oriented and trivialized by 
$(\partial_{y_1}, \partial_{y_2}, \partial_{y_3}- \mathsf{f}'(t)\partial_{t})$. The basis $(\partial_{x_1}, \partial_{x_2}, \partial_{x_3})$ orients $N_j^1$, and the outward pointing vector is $V_o=-\partial_t$ on $N_3^1$ and $V_o=\partial_t$ on $N_7^1$. Since a basis $(e_1,e_2,e_3)$ of $\partial C$ is positive if $(V_o,e_1,e_2,e_3)$ is positive on $C$, we have that
$\partial[{C}]=[N_3^1] - [N_7^1]$ as oriented manifolds.

If $j=3,7$, the bundle $\nu(N_j^1)$ is oriented by $(\partial_t,\partial_{y_1},\partial_{y_2},\partial_{y_3})$. Hence, $\nu(N_j^1)\cong \la \partial_t|_{N_j^1} \ra \oplus \nu(C)|_{N_j^1}$ as oriented bundles. The Thom form of $\nu(N_j^1)$ is then the product of the Thom forms of the bundles $\la \partial_t|_{N_j^1}  \ra$ and $\nu(C)|_{N_j^1}$ (see \cite[Proposition 6.19]{Bott-Tu}); this idea allows us to find $\beta$. 

Let  $\zeta \colon [-1/2,1/2]^3 \to \R$ be a positive radial function whose integral is $1$ and its support is contained in $B_{2\delta}(0)-B_{\delta}(0)$, for a small $\delta$. Consider a periodic extension of $\zeta$ to $\R^3/\Z^3$.
Set $h\colon [1/2,3/2]\to \R$ a bump function that equals $0$ on $[1/2,1/2+\e]\cup[3/2-\e,3/2]$, and $1$ on $[1/2+2\e,3/2-2\e]$. Then, 
\begin{equation} \label{eqn:beta}
\beta|_{[t,[x_1+iy_1,x_2+iy_2,x_3+iy_3]]}= h(t)\zeta(y_1,y_2,y_3-\mathsf{f}(t))dy_1\wedge dy_2 \wedge (dy_3-\mathsf{f}'(t)dt), \qquad t\in [1/2,3/2]
\end{equation}
induces a well-defined form on $M$ that vanishes around $t=1/2$. Indeed, around a small neighborhood of the interior of $C$, the expression
$
\zeta(y_1,y_2,y_3-\mathsf{f}(t)) dy_1\wedge dy_2 \wedge (dy_3-\mathsf{f}'(t)dt) 
$
defines a closed form whose pullback to $\nu(\mathrm{Int}(C))$ represents the Thom class of that oriented bundle. The support of the form
$$
d\beta= h'(t) \zeta(y_1,y_2,y_3-\mathsf{f}(t)) dt\wedge dy_1\wedge dy_2 \wedge dy_3, \qquad t\in [1/2,3/2]
$$
 is contained on a neighborhood of $N_3^1\cup N_7^1$, and we can write $d\beta= \upsilon_3^1-\upsilon_7^1$ where the support of $\upsilon_j^1$ is contained in the following neighborhood $V_j^1$ of $N_j^1$:
 \begin{align*}
V_3^1=&\{ [t,[x_1 +iy_1,x_2+iy_2,x_3+iy_3]], \quad |t-1/2|<2\e , \|(y_1,y_2,y_3) \|<2\delta  \}, \\
V_7^1=&\{ [t,[x_1 +iy_1,x_2+iy_2,x_3+iy_3]], \quad |t-3/2|<2\e , \|(y_1,y_2,y_3-1/4) \|<2\delta  \} .  
 \end{align*}
 
If $[t,[z]]\in V_3^1$ satisfies $1/2-2\e < t<1/2$ and  $\|(y_1,y_2,y_3) \|<2\delta$, from $\mathsf{f}(t+1)=1/4$, we obtain
 $d\beta|_{[t,[z]]}=d\beta|_{[t+1,[iz_1,iz_2,-z_3]]}= h'(t+1)\zeta(x_1,x_2,-y_3-1/4)dt\wedge dx_1\wedge dx_2 \wedge d(-y_3)$. This vanishes because $|-y_3-1/4|\geq 1/4-2\delta$ and $\zeta$ is supported around $0$.
 In the same way, $d\beta=0$ at points $[t,[z]]\in V_7^1$ with $3/2\leq t \leq 3/2+2\e$.
 
We now check that $[\upsilon_3^1]$ is the Thom class of $N_3^1$. First,
$\upsilon_3^1$ is closed because $d\beta$ is and the support of $\upsilon_3^1$ is disjoint to that of $\upsilon_7^1$. In addition, let $p \in N_3^1$ and let $F_p$ be the fiber of $V_3^1$ at $p$, oriented by $(\partial_t, \partial_{y_1}, \partial_{y_2}, \partial_{y_3})$. Since
$\upsilon_3^1$ is supported on the level sets  $1/2+\epsilon<t<1/2+2\epsilon$, where $\mathsf{f}=0$, we have
\begin{align*}
\int_{F_p}{\upsilon_3^1}=& \int_{t=1/2+\e}^{1/2+2\e}{h'(t)dt} \int_{\|(y_1,y_2,y_3) \|<2\delta}{\zeta(y_1,y_2,y_3) dy_1\wedge dy_2 \wedge dy_3}
= h(1/2+2\e)-h(1/2+\e)=1.
\end{align*}
Hence, $[\upsilon_3^1]$ is the Thom class of $N_3^1$. 
A similar computation shows that $[\upsilon_7^1]$ is the Thom class of $N_7^1$.

Of course, if $j=1,2$ and $k=1,2$ we have $d(\beta|_{N_j^k})=(\upsilon_3^1-\upsilon_7^1)|_{N_j^k}=0$. Let $\sigma_1=-1$ and $\sigma_2=1$; we now prove
$
\int_{N_j^k}{\beta|_{N_j^k}}=\sigma_j
$; this implies the last claim, because
$\int_{N_j^k}{\varphi}=2$.
For convenience, we rewrite the formulas of $N_j^k$ at the level set $t=1$:
\begin{align}
N_1^k=&\{[1,[x_1+ix_1,x_2+ix_2,-\e_k - iy_3]], \quad x_1,x_2,y_3 \in [-1/2,1/2]\}, \quad \e_1=0, \quad \e_2=1/2,\\
N_2^k=&\{[1,[-x_1+ix_1,-x_2+ix_2,-\delta_k - iy_3]], \quad x_1,x_2,y_3 \in [-1/2,1/2] \}, \quad \delta_1=-1/4, \quad \delta_2=1/4,
\end{align}
and these are oriented by $\varphi|_{N_1^k}=2 dx_{1}\wedge dx_2 \wedge dy_3$ and $\varphi|_{N_2^k}=-2 dx_{1}\wedge dx_2 \wedge dy_3$. Since $h(1)=1$ and $\mathsf{f}(1)=0$, we have:
\begin{align*}
\int_{N_1^k} \beta=& \int_{\|(x_1,x_2,-y_3)\|<2\delta} \zeta(x_1,x_2,-y_3) dx_1\wedge dx_2 \wedge d(-y_3)= -1,\\
\int_{N_2^k} \beta=& \int_{\|(x_1,x_2,-y_3)\|<2\delta} \zeta(x_1,x_2,-y_3) dx_1\wedge dx_2 \wedge d(-y_3)=1.
\end{align*}
Here we used that the map $(x_1,x_2,y_3) \to (x_1,x_2,-y_3)$ reverses the orientation, and the expressions for $\varphi|_{N_j^k}$. 
\end{proof}

\begin{remark}
Alternatively, for $j=3,7$ we can show $[\upsilon_j^1]=PD[N_j^1]$ by a direct computation. We focus on $j=3$.
According to the proof of Proposition \ref{prop:cohomology-M}, given $[\alpha] \in H^3(M)$, we can assume that $\alpha= dt\wedge \beta + \lambda_1 \re(dz_{123}) + \lambda_2\imag(dz_{123}) + \mu_1 \re(dz_{12\bar{3}}) + \mu_2\imag(dz_{12\bar{3}})$, with $f^*\beta=\beta$. 
We first observe
$$
\alpha \wedge d\beta = -(\lambda_1+\mu_1) h'(t)\zeta(y_1,y_2,y_3-\mathsf{f}(t)) dt\wedge dx_{123}\wedge dy_{123}, \qquad t\in [1/2,3/2]
$$
because $\imag(dz_{12{3}})\wedge dy_{123}= \imag(dz_{12\bar{3}})\wedge dy_{123}  =0$, and $\real(dz_{12{3}})\wedge dy_{123} = \real(dz_{12\bar{3}})\wedge dy_{123}=dx_{123}\wedge dy_{123}$. Since $\upsilon_3^1$ is the extension of $d\beta|_{V_3^1}$, and it is supported on the level sets  $1/2+\epsilon<t<1/2+2\epsilon$, where $\mathsf{f}=0$, we have
\begin{align*}
\int_{M} \alpha \wedge \upsilon_3^1=&\int_{V_3^1} \alpha \wedge d\beta 
=  - (\lambda_1+\mu_1) \int_{t=1/2+\e}^{t=1/2+2\e} h'(t)dt \int_{T^6} \zeta(y_1,y_2,y_3) dx_{123}\wedge dy_{123} = \lambda_1+\mu_1.
\end{align*}
Using the coordinates in \eqref{eqn:N3-6} and the orientation in \eqref{eqn:or-3-6}, we get $\alpha|_{N_3^1}=(\lambda_1+\mu_1) dx_{123}$, and $\int_{N_3^1} \alpha = \lambda_1+\mu_1$. This shows $PD[N_3^1]=[\upsilon_3^1]$.
\end{remark}

\begin{remark}
Following the notation of the proof, we provide a geometric justification of the equality $
\int_{N_j^k}{\beta|_{N_j^k}}=\sigma_j
$ for $j,k=1,2$.
 Observe that
 $N_j^k\cap C=\{p_j^k\}$, where $p_1^k=[1,[0,0,-\e_k]]$ and $p_2^k=[1,[0,0,-\delta_k ]]$. Pick 
 the basis $b_0=(-\partial_t, \partial_{x_1}, \partial_{x_2}, \partial_{x_3})$ of $TC|_{N_j^k}$, and consider the positively oriented  basis of $N_j^k$ viewed at the level set $t=1$
  given by $b_j=(-\sigma_j \partial_{x_1}+ \partial_{y_1}, -\sigma_j \partial_{x_2}+ \partial_{y_2}, \sigma_j \partial_{y_3})$ (note that $\varphi(-\sigma_j \partial_{x_1}+ \partial_{y_1}, -\sigma_j \partial_{x_2}+ \partial_{y_2}, \sigma_j \partial_{y_3})=2$). Then, 
$(b_0,b_1)$ is a negative basis for $T_{p_1^k}M$ and
$(b_0,b_2)$ is a positive basis for $T_{p_2^k}M$ because: 
\begin{align*}
&(-\partial_t) \wedge \partial_{x_1}\wedge \partial_{x_2} \wedge \partial_{x_3} \wedge (-\sigma_j \partial_{x_1}+ \partial_{y_1})\wedge ( -\sigma_j \partial_{x_2}+ \partial_{y_2})\wedge ( \sigma_j\partial_{y_3}) \\
&=
-\sigma_j \partial_t \wedge \partial_{x_1}\wedge \partial_{x_2} \wedge \partial_{x_3} \wedge \partial_{y_1}\wedge \partial_{y_2} \wedge \partial_{y_3}\\
&= \sigma_j \partial_t \wedge \partial_{x_1} \wedge \partial_{y_1} \wedge \partial_{x_2} \wedge \partial_{y_2} \wedge \partial_{x_3} \wedge \partial_{y_3}.
\end{align*}
Around $C$ and on the level sets where $h=1$, namely $1/2+2\e<t<3/2-2\e$, the form  $\beta=\zeta(y_1,y_2,y_3-\mathsf{f}(t)) dy_1\wedge dy_2 \wedge (dy_3-\mathsf{f}'(t)dt)$ is a representative of the Thom class of $\nu(C)$. Therefore,  
$[\beta|_{N_j^k}]$ is the Thom class of $p_j^k$ in $N_j^k$, oriented negatively if $j=1$ or positively if $j=2$. 
Hence, the integral of $\beta$ on $N_1^k$ (resp. $N_2^k$) equals $-1$ (resp. $1$).
\end{remark}

\begin{remark} A formula similar to \eqref{eqn:cob} allows to find
an oriented cobordism between $N_1^1$ and $N_2^1$.
\end{remark}

The projection of the cobordism $C$ obtained in the proof of Proposition \ref{prop:prim-M} to $M'$ determines a cobordism between $N_3'$ and $N_7'$ that intersects $N_1'$ negatively and $N_2'$ positively. The average of the form $\alpha$ provides a primitive for the difference of the Thom forms of $N_3'$ and $N_7'$. We make this precise in the following result.

\begin{lem} \label{lem:prim-M'}
There are small tubular neighborhoods $V_3',V_7'$ of $N_3',N_7'$ on $M'$ and $\kappa$-invariant forms
 $\alpha \in \Omega^3(M')$, $\upsilon_3, \upsilon_7 \in \Omega^4(M')$, 
such that
\begin{enumerate}
\item The forms $\upsilon_3$, $\upsilon_7$ are closed and supported on $V_3'$ and $V_7'$ respectively. The cohomology class $[\upsilon_j]$ is the Thom class of $N_j'$.
\item The pushforwards of $2\upsilon_3$ and $2\upsilon_7$ to $X$ represent the Thom class of $N_3$ and $N_7$ on $X$ respectively.
\item The form $\alpha$ vanishes in a neighborhood of the level set $t=1/2$ and $d\alpha=\upsilon_3-\upsilon_7$. In particular, $\alpha$ is closed on $M'-(V_3'\cup V_7')$.
\item There are tubular neighborhoods $V_1'$ and $V_2'$ of $N_1'$ and $N_2'$ such that
$
\alpha|_{V_1'}=-\pi_1^*(\varphi|_{N_1'}), 
$
and 
$
\alpha|_{V_2'}=\pi_2^*(\varphi|_{N_2'}), 
$
where $\pi_j \colon V_j' \to N_j'$ denotes the nearest point projection for $j=1,2$.
\end{enumerate}
\end{lem}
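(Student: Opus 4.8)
My plan is to obtain $\alpha,\upsilon_3,\upsilon_7$ by symmetrizing the $M$-forms $\beta,\upsilon_3^1,\upsilon_7^1$ of Proposition \ref{prop:prim-M} over the group $H=\langle\iota_1,\iota_2,\kappa\rangle\cong\Z_2^3$. Since $\kappa$ commutes with $\iota_1,\iota_2$ and all three preserve $\varphi$ (hence the orientation and metric of $M$), averaging produces $H$-invariant forms; being invariant under the deck group $\langle\iota_1,\iota_2\rangle$ of $q\colon M\to M'$, they descend to $M'$, and invariance under $\kappa$ makes the descended forms $\kappa$-invariant. Concretely I would set
\[
\widetilde{\beta}=\tfrac14\sum_{h\in H}h^*\beta,\qquad \widetilde{\upsilon}_j=\tfrac14\sum_{h\in H}h^*\upsilon_j^1\quad(j=3,7),
\]
and let $\alpha,\upsilon_3,\upsilon_7$ be the forms on $M'$ with $q^*\alpha=\widetilde\beta$, $q^*\upsilon_j=\widetilde\upsilon_j$. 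The factor $\tfrac14$ is forced by orbit--stabilizer bookkeeping: combining the $\iota_1,\iota_2$ actions from section \ref{sec:sing} with the computation that $\kappa$ fixes $N_1^1,N_3^1$ and swaps $N_2^1\leftrightarrow N_2^2$, $N_7^1\leftrightarrow N_7^2$, one checks that $\Stab_H(N_j^1)$ has order $4$ and the orbit $\{N_j^1,N_j^2\}=q^{-1}(N_j')$ has size $2$, for each $j\in\{1,2,3,7\}$.

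For claims (1) and (3): since $d$ commutes with pullback and averaging, $d\widetilde\beta=\widetilde\upsilon_3-\widetilde\upsilon_7$, which descends to $d\alpha=\upsilon_3-\upsilon_7$, and the thin slab around $\{t=1/2\}$ on which $\beta$ (hence each $h^*\beta$) vanishes gives the vanishing of $\alpha$ there. Each $\widetilde\upsilon_j$ is supported near $N_j^1\cup N_j^2=q^{-1}(N_j')$, so $\upsilon_j$ is supported on a small tubular neighbourhood $V_j'$ of $N_j'$, with $V_3',V_7'$ disjoint. The four summands meeting $N_j^1$ are the $h^*\upsilon_j^1$ with $h\in\Stab_H(N_j^1)$, and each is again a Thom form of $N_j^1$ with the \emph{same} co-orientation because $h$ preserves $\varphi$; hence $\widetilde\upsilon_j$ integrates to $\tfrac14\cdot4=1$ over a normal fibre of $N_j^1$. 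As $q$ restricts near $N_j^1$ to the free $2{:}1$ quotient by $\Stab_{\langle\iota_1,\iota_2\rangle}(N_j^1)$ and carries normal fibres isomorphically, $\upsilon_j$ integrates to $1$ over a normal fibre of $N_j'$, so $[\upsilon_j]$ is the Thom class of $N_j'$.

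Claim (2) is the passage to $X=M'/\kappa$. Since $\kappa$ fixes $N_3',N_7'$ pointwise and acts as $-\mathrm{id}$ on their rank-$4$ normal bundles, $q'\colon M'\to X$ is a $2{:}1$ cover branched along them, and a normal fibre $\C^2/\Z_2$ of $N_j$ in $X$ is covered $2{:}1$ by the fibre $\C^2$ of $N_j'$. Orbifold integration then gives $\int_{\C^2/\Z_2}\upsilon_j=\tfrac12\int_{\C^2}\upsilon_j=\tfrac12$, exactly as in the normalizations of section \ref{sec:prod}, so the $\kappa$-invariant form $2\upsilon_j$ represents the Thom class of $N_j$ on $X$.

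The heart of the lemma is claim (4), which asks for an equality of \emph{forms}. First I would establish the cohomological version: because $q$ restricts over $N_j'$ ($j=1,2$) to a $2{:}1$ cover and every $h\in H$ preserves the $\varphi$-orientation of $N_j^1$, the identity $\int_{N_j^k}\beta=\sigma_j$ of Proposition \ref{prop:prim-M} descends to $\int_{N_j'}\alpha=\sigma_j=\sigma_j\int_{N_j'}\varphi$, i.e. $[\alpha|_{N_j'}]=\sigma_j[\varphi|_{N_j'}]$ in $H^3(N_j')$ (this is the signed intersection of the projected cobordism $q(C)$ with $N_j'$). To upgrade this to a pointwise identity, observe that $\alpha$ is closed on $V_j'$ (which is disjoint from $V_3'\cup V_7'$ and from $\{t=1/2\}$), that $\pi_j^*(\varphi|_{N_j'})$ is a closed $3$-form there, and that $\pi_j$ is a deformation retraction; hence $\alpha-\sigma_j\pi_j^*(\varphi|_{N_j'})$ is exact on $V_j'$, equal to $d\eta_j$ for a $\kappa$-invariant $\eta_j$ (average over $\kappa$, which fixes $N_j'$). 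Replacing $\alpha$ by $\alpha-d(\chi_j\eta_j)$ for a $\kappa$-invariant bump function $\chi_j$ equal to $1$ on a smaller neighbourhood yields $\alpha|_{V_j'}=\sigma_j\pi_j^*(\varphi|_{N_j'})$ while, being supported away from $V_3'\cup V_7'$ and $\{t=1/2\}$, preserving $d\alpha=\upsilon_3-\upsilon_7$, the vanishing near $t=1/2$, and $\kappa$-invariance. I expect this last step to be the main obstacle: one must track the orientations and the various $\tfrac12$ factors so that the descended integral is exactly $\sigma_j$, and then perform the exact-form modification without disturbing the Thom forms, the primitive relation, or the $\kappa$-symmetry.
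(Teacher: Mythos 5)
Your proposal is correct and follows essentially the same route as the paper: averaging $\beta,\upsilon_3^1,\upsilon_7^1$ over $K=\langle\iota_1,\iota_2,\kappa\rangle$ with the factor $\tfrac14$ justified by the order-$4$ stabilizers, descending to $M'$, the $\tfrac12$ fibre-volume argument for the Thom class on $X$, and finally upgrading $[\alpha|_{N_j'}]=\sigma_j[\varphi|_{N_j'}]$ to a pointwise identity via a $\kappa$-invariant primitive and a bump function. The only cosmetic difference is that the paper splits the average into the stabilizer coset sums $\upsilon_j'$, $\upsilon_j''$ to exhibit the Thom forms of $N_j^1$ and $N_j^2$ separately, whereas you integrate the full average over a normal fibre directly; the content is identical.
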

\begin{proof}
For $j=3,7$ we denote $V_j^2=\iota_2(V_j^1)\subset M$
and $K=\la \iota_1,\iota_2,\kappa \ra=\Z_2^3$.
We let $K_3^1=\{\mathrm{Id}, \iota_1, \kappa, \kappa \circ \iota_1\}$ and $K_7^1=\{\mathrm{Id}, \iota_1, \kappa \circ \iota_2, \kappa \circ \iota_1 \circ \iota_2\}$.
We first observe that every element of $K_j^1$ maps $N_j^1$ onto itself, and since the elements in $K_j^1$ are isometries, they preserve $V_j^1$ (and its fiber bundle structure). In addition, the elements of 
$K_j^2=K-K_j^1$
swap $N_j^1$ with $N_j^2$ and therefore, $V_j^1$ with $V_j^2$. 
Of course, if $\gamma\in K$ maps $N_j^1$ to $N_j^k$ with $k=1$ or $k=2$, then the restrictions $\gamma\colon N_j^1 \to N_j^k$ and $\gamma_* \colon {\nu(N_j^1)}\to {\nu(N_j^2)}$ preserve the fixed orientations because the submanifolds $N_j^k$ are oriented by
 $\varphi|_{N_j^k}$ and all the elements in $K$ fix $\varphi$ and preserve the orientation of $M$. Hence, if
$j=3,7$, the form $\upsilon_j'=\frac{1}{4}\sum_{\gamma\in K_j^1}\gamma^*\upsilon_j^1$ is another representative of the Thom class of $N_j^1$ in $M$, and $\upsilon_j''=\frac{1}{4}\sum_{\gamma\in K_j^2}\gamma^*\upsilon_j^1$ represents the Thom class of $N_j^2$.

The pushforward $\upsilon_j$ of $\upsilon_j'+\upsilon_j''=\frac{1}{4} \sum_{\gamma \in K} \gamma^* \upsilon_j^1$ to $M'$ a is $\kappa$-invariant form that represents the Thom class of $N_j'\subset M'$. It is supported in the projection of $V_j^1\cup V_j^2$ to $M'$; that we denote by $V_j'$.
We observe that (the pushforward of) $2\upsilon_j$ induces a representative of the Thom class of $N_j$ on $X$. The reason is that
 the fiber of the normal bundle at a point $p\in N_j'$ is $F_p\cong \C^2$, and on $X$ this is $F_p/\Z_2$; hence,
$
\int_{F_p/\Z_2}{2\upsilon_j}=\frac{1}{2}\int_{F_p}{2\upsilon_j}=1.
$
Consider the $K$-invariant form $\alpha'=\frac{1}{4}\sum_{\g\in K} \g^*\beta$. On $M$ we have $$
d\alpha'=\frac{1}{4}\sum_{\g\in K} \g^*\upsilon_3^1 - \frac{1}{4}\sum_{\g\in K} \g^*\upsilon_7^1
=(\upsilon_3'+\upsilon_3'')-(\upsilon_7'+\upsilon_7'').
$$
Hence, the pushforward of $\alpha'$ (that we keep denoting by $\alpha'$) satisfies $d\alpha'=\upsilon_3-\upsilon_7$. Since $\beta$ vanishes in a neighborhood of the level set $t=1/2$ and the elements of $K$ preserve that level set, we have that $\alpha'$ vanishes in a neighborhood of $t=1/2$.

Similarly to Lemma \ref{lem:smothness}, we modify $\alpha'$ around the level set $t=0$ to obtain $\alpha$ satisfying all these conditions. We again set $\sigma_1=-1$ and $\sigma_2=1$. For $j=1,2$, if $\gamma \in K$ then $\gamma$ maps $N_j^1$ onto itself, or it swaps $N_j^1$ and $N_j^2$. Since $\gamma$ preserves the orientations in both cases, we have 
$
\int_{N_j^k} \gamma^*\beta= \int_{\gamma(N_j^k)} \beta= {\sigma_j}= \frac{\sigma_j}{2}\int_{N_j^k} \varphi.  
$
Hence, $[(\gamma^*\beta)|_{N_j^k}]=\frac{\sigma_j}{2}[\varphi|_{N_j^k}]$ and  therefore $[\alpha'|_{N_j^k}]=\sigma_j[\varphi|_{N_j^k}]$. Thus,  $[\alpha'|_{N_j'}]=\sigma_j[\varphi|_{N_j'}]$ on $M'$.

By the Poincaré's Lemma given a tubular neighborhood $W_j'$ of $N_j'$, there is $\theta_j\in \Omega^2(W_j')$
such that $d\theta_j= \sigma_j \pi_j^*(\varphi|_{N_j'}) - \alpha'|_{W_j'}$. 
Indeed, we can assume that $\theta_j$ is $\kappa$-invariant as both $\pi_j^*(\varphi|_{N_j'})$ and $\alpha'$ are. Consider $V_1',V_2'$ smaller tubular neighborhoods with $\overline{V}_j'\subset W_j'$, and $\kappa$-invariant bump functions $h_j$ that equal $1$ on $V_j'$ and $0$ on $M'-W_j'$. The form
$$
\alpha= \alpha' + d(h_1\theta_1) + d(h_2\theta_2).
$$
satisfies all the stated conditions.
\end{proof}

The equality \eqref{eqn:sq-thom}, and Lemma \ref{lem:prim-M'} enable us to 
find a primitive of $\tau_7^2-\tau_3^2$ in Lemma \ref{lem:prim-wtM}. To state it, we introduce some notation. For $j=1,2,3,7$, we define $V_j$ as the projection of $V_j'\subset M'$ onto $X$ and we let $U_j=\rho^{-1}(V_j)$. Recall that at the end of section \ref{sec:manifold} we fixed a neighborhood $O_j$ of $N_j\subset X$ and we assumed that $\tau_j^2$ is supported on $\rho^{-1}(O_j)$. Observe that it is not
restrictive to assume that $O_j \subset U_j$, and we will assume that.

\begin{lem} \label{lem:prim-wtM}
There is a form $\widetilde{\alpha} \in \Omega^3(\widetilde{M})$ such that $d(\widetilde{\alpha})=\tau_7^2-\tau_3^2$.
In particular, it is closed on $U_1\cup U_2$. In addition,
$$
\widetilde{\alpha} \wedge \tau_1= -4\mathrm{pr}^*_1(\varphi|_{N_1}) \wedge \tau_1, \qquad
\widetilde{\alpha} \wedge \tau_2= 4\mathrm{pr}^*_2(\varphi|_{N_2}) \wedge \tau_2,
$$
where $\mathrm{pr}_j=\pi_j \circ \rho \colon \rho^{-1}(V_j') \to N_j$.
\end{lem}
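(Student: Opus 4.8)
The plan is to realize $\widetilde\alpha$ as the pullback to $\widetilde M$ of a primitive of $\rho_*(\tau_7^2) - \rho_*(\tau_3^2)$ constructed on the orbifold $X$, and then to repair the pullback near the exceptional divisors using Lemma \ref{lem:smothness}. First I would record that, since $\tau_j^2$ vanishes on a neighborhood of $E_j$ and is supported in the annular region $\e \le r \le 2\e$, it descends to a smooth form $\psi_j := \rho_*(\tau_j^2) \in \Omega^4(X)$, supported in $O_j$ and vanishing on a neighborhood of $N_j$, with $\tau_j^2 = \rho^*\psi_j$. By \eqref{eqn:sq-thom} we have $[\psi_j] = -2\,\mathrm{Th}[N_j]$ and fiber integral $\int_{\C^2/\Z_2}\psi_j = -2$. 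Thus $\tau_7^2 - \tau_3^2 = \rho^*(\psi_7 - \psi_3)$, and it suffices to produce a primitive $\mu \in \Omega^3(X)$ of $\psi_7 - \psi_3$ whose pullback is smooth and which near $N_1, N_2$ is a pullback under $\pi_j$.

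For the construction of $\mu$, the $\kappa$-invariant forms $\alpha, \upsilon_3, \upsilon_7$ of Lemma \ref{lem:prim-M'} descend to $X$, where $d\alpha = \upsilon_3 - \upsilon_7$ and $2\upsilon_j$ is the normalized Thom form of $N_j$ (fiber integral $1$), so $[\upsilon_j] = \tfrac12\mathrm{Th}[N_j]$. Hence each $\psi_j + 4\upsilon_j$ is closed, supported in a tubular neighborhood $W_j$ of $N_j$, and has fiber integral $-2 + 4\cdot\tfrac12 = 0$; via the Thom isomorphism $H^4_c(W_j) \cong H^0(N_j) = \R$ given by fiber integration, this class vanishes, so $\psi_j + 4\upsilon_j = d\eta_j$ for some $\eta_j$ supported in $W_j$. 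I then set $\mu = 4\alpha + \eta_7 - \eta_3$, so that $d\mu = (4\upsilon_3 - 4\upsilon_7) + (\psi_7 + 4\upsilon_7) - (\psi_3 + 4\upsilon_3) = \psi_7 - \psi_3$. On $V_1, V_2$, where $\eta_3$ and $\eta_7$ vanish, item (4) of Lemma \ref{lem:prim-M'} gives $\mu = -4\pi_1^*(\varphi|_{N_1})$ on $V_1$ and $\mu = 4\pi_2^*(\varphi|_{N_2})$ on $V_2$.

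Finally I would make $\rho^*\mu$ smooth. The crucial observation is that $d\mu = \psi_7 - \psi_3$ vanishes not just on $N_j$ but on a whole neighborhood of each $N_j$: away from $N_3, N_7$ both terms vanish, and near $N_3$ (resp.\ $N_7$) the surviving term $\psi_3$ (resp.\ $\psi_7$) vanishes on $\{r < \e\}$. Thus $\mu$ is closed near every $N_j$, and I apply the construction of Lemma \ref{lem:smothness} on tubular neighborhoods small enough to lie in these regions, replacing $\mu$ by a form $\mu_s$ that differs from it by an exact form supported near the $N_j$ and satisfies $\mu_s = \pi_j^*(\mu_s|_{N_j})$ near each $N_j$; near $N_1, N_2$ the form already has this shape and is left unchanged. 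Setting $\widetilde\alpha = \rho^*\mu_s$ gives a smooth form with $d\widetilde\alpha = \rho^*(\psi_7 - \psi_3) = \tau_7^2 - \tau_3^2$. On $U_j = \rho^{-1}(V_j)$ we obtain $\widetilde\alpha = \mp 4\,\mathrm{pr}_j^*(\varphi|_{N_j})$, which is closed there, and wedging with $\tau_j$ (supported in $U_j$) yields the two stated product formulas.

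The main obstacle is precisely the smoothness of $\rho^*\mu$ across $E_3$ and $E_7$, where $\mu$ is not closed; it is overcome by the observation that $\tau_j^2$, hence $\psi_j$ and $d\mu$, vanishes on an entire neighborhood of $N_j$, so Lemma \ref{lem:smothness} applies on a sufficiently small neighborhood. A secondary technical point is the existence of the compactly supported primitives $\eta_j$, which relies on the vanishing of the fiber integral of $\psi_j + 4\upsilon_j$ — the numerical content of \eqref{eqn:sq-thom} together with the normalization of the Thom form.
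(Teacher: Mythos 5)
Your proposal is correct and follows essentially the same route as the paper: both produce compactly supported primitives $\eta_j$ of $\rho_*(\tau_j^2)+4\upsilon_j$ via \eqref{eqn:sq-thom} and the Thom isomorphism on a tubular neighborhood, combine them with $4\alpha$ from Lemma \ref{lem:prim-M'}, and use the mechanism of Lemma \ref{lem:smothness} (exploiting that the form is closed near each $N_j$) to make the pullback to $\widetilde{M}$ smooth before reading off the wedge formulas on $U_1,U_2$. The only cosmetic difference is that the paper corrects each $\eta_j$ separately before pulling back, whereas you correct the assembled primitive $\mu$ in one step.
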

\begin{proof}
Let $j=3,7$. We first observe that $\rho^*(\upsilon_j)$ is smooth because it vanishes on a small neighborhood of $N_j$ as $\alpha$ vanishes on a small neighborhood of $t=1/2$ and $d\alpha=\upsilon_3 -\upsilon_7$. Since $\tau_j^2$ vanishes on a neighborhood of $E_j$, the pushforward $\rho_*(\tau_j^2)$ represents an element in $H^4_c(V_j)$. 
This group is generated by the Thom class $\mathrm{Th}[N_j]_c$, which is represented by $2\upsilon_j$ according to Lemma \ref{lem:prim-M'}. In fact, equation \eqref{eqn:sq-thom} shows
$[\rho_*(\tau_j^2)]_c=-2\mathrm{Th}[N_j]_c=-2[2\upsilon_j]_c$. Hence, there are forms $\eta_3,\eta_7\in \Omega(X)$ with compact support on $V_j$ such that
$d\eta_j= \rho_*(\tau_j^2) +4\upsilon_j$.
 
The strategy outlined in Lemma \ref{lem:smothness} allows to modify $\eta_j$ around $N_j$ and find a form $\eta_j'$ supported on $V_j$
so that its pullback $\rho^*(\eta_j')$ is a smooth form on $\widetilde{M}$ and $d\eta'_j=\rho_*(\tau_j^2) +4\upsilon_j$. This is possible because on a small tubular neighborhood $O_j^3$ of $N_j$ both $\rho_*(\tau_j^2)$ and $\upsilon_j$ vanish, so
$d\eta_j|_{O_j^3}= (\rho_*(\tau_j^2) +4\upsilon_j)|_{O_j^3}=0$.  We now argue that $\rho^*(\alpha)$ is smooth. Since $\alpha=0$ on a neighborhood of the level set $t=1/2$, it is smooth on $M-(U_1\cup U_2)$. In addition, on $M'$ we had $\alpha|_{V_j'}=\sigma_j\pi_j^*(\varphi|_{N_j'})$, where  $\sigma_1=-1$ and $\sigma_2=1$. The same identity holds for their pushforwards to $X$, so that $\rho^*(\alpha)|_{U_j}=\rho^*(\alpha|_{V_j})= \sigma_j\mathrm{pr}_j^*(\varphi|_{N_j})$. Hence, it is smooth.

Finally, we define $\widetilde{\alpha}= \rho^*(\eta_7' +4 \alpha - \eta_3')$. Then $d\widetilde{\alpha}= \tau_7^2 + 4\rho^*\upsilon_7 + 4\rho^*(\upsilon_3 - \upsilon_7) - (\tau_3^2 + 4\rho^*(\upsilon_3))= \tau_7^2-\tau_3^2 $. Given $j=1,2$,  the support of $\tau_j$ is contained in $O_j\subset U_j$ and both $\rho^*(\eta_3')$ and $\rho^*(\eta_7')$ vanish there. Hence, 
$
\widetilde{\alpha} \wedge \tau_j= 4 \rho^*(\alpha)\wedge\tau_j= \sigma_j 4 \mathrm{pr}_j^*(\varphi|_{N_j}) .
$ 
\end{proof}

We finish by computing the triple Massey product $\la [\tau_1 + \tau_2], [\tau_7+ \tau_3], [\tau_7-\tau_3] \ra$.

\begin{theorem} \label {theo:non-formal}
The triple Massey product $\la [\tau_1 + \tau_2], [\tau_7+ \tau_3], [\tau_7-\tau_3] \ra$ is not trivial.
Therefore, $\widetilde{M}$ is non-formal.
\end{theorem}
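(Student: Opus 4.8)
The plan is to evaluate the product directly from the primitives already produced. Set $\alpha_1=\tau_1+\tau_2$, $\alpha_2=\tau_7+\tau_3$, $\alpha_3=\tau_7-\tau_3$, all of degree $2$. The wedge $\alpha_1\wedge\alpha_2$ vanishes \emph{as a form}, since $\tau_j\wedge\tau_k=0$ for $j\neq k$ by disjointness of supports, so I may take $\xi_{12}=0$. For the second primitive, $\alpha_2\wedge\alpha_3=\tau_7^2-\tau_3^2=d\widetilde{\alpha}$ with $\widetilde{\alpha}$ the form of Lemma \ref{lem:prim-wtM}, so $\xi_{23}=\widetilde{\alpha}$. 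Since $\alpha_1$ has even degree, $\widehat{\alpha}_1=\alpha_1$, and the closed representative \eqref{eqn:closed-Massey} is $y=-(\tau_1+\tau_2)\wedge\widetilde{\alpha}$. Using $\widetilde{\alpha}\wedge\tau_1=-4\,\mathrm{pr}_1^*(\varphi|_{N_1})\wedge\tau_1$ and $\widetilde{\alpha}\wedge\tau_2=4\,\mathrm{pr}_2^*(\varphi|_{N_2})\wedge\tau_2$ from Lemma \ref{lem:prim-wtM}, and recalling from \eqref{eqn:cohom} that $\mathrm{pr}_j^*(\beta)\wedge\tau_j$ represents $[\beta]\otimes\mathbf{x}_j$, I obtain
\[
[y]=4\,[\varphi|_{N_1}]\otimes\mathbf{x}_1-4\,[\varphi|_{N_2}]\otimes\mathbf{x}_2\in H^5(\widetilde{M}).
\]

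It remains to show $[y]\notin\mathcal{I}$, where $\mathcal{I}$ is the ideal generated by $[\alpha_1]$ and $[\alpha_3]$. Write $H^5(\widetilde{M})=H^5(X)\oplus\bigoplus_j H^3(N_j)\otimes\langle\mathbf{x}_j\rangle$ as in \eqref{eqn:cohom}, abbreviate $f_j=[\varphi|_{N_j}]\otimes\mathbf{x}_j$, and note $[y]=4f_1-4f_2$ sits entirely in the exceptional summand. I will compute the projection of $\mathcal{I}^5=[\alpha_1]\cdot H^3(\widetilde{M})+[\alpha_3]\cdot H^3(\widetilde{M})$ to that summand. Products of $[\alpha_1]=\mathbf{x}_1+\mathbf{x}_2$ with a class $dy_3\otimes\mathbf{x}_k$ or $dx_3\otimes\mathbf{x}_k$ either vanish (disjoint supports when $k\notin\{1,2\}$) or, by \eqref{eqn:prod-1}, are proportional to $\mathrm{pr}_j^*(\cdot)\wedge\tau_j^2$, which is a pullback under $\rho$ and hence lands in the $H^5(X)$ summand; the same holds for $[\alpha_3]$. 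For $\zeta\in H^3(X)$, the rule \eqref{eqn:prod-2} gives $[\alpha_1]\cdot\zeta=[\zeta|_{N_1}]\otimes\mathbf{x}_1+[\zeta|_{N_2}]\otimes\mathbf{x}_2$. The $dt$-factor generators of $H^3(X)$ restrict to $0$ on every $N_j$; and since each $N_j$ is a $3$-torus with $H^3(N_j)=\langle[\varphi|_{N_j}]\rangle$, while $\int_{N_1}\varphi=\int_{N_2}\varphi$ and $\int_{N_1}\zeta=\int_{N_2}\zeta$ (the latter forced by $PD[N_1]=PD[N_2]$, Lemma \ref{lem:PDuals}), the coefficients of $[\zeta|_{N_1}]$ and $[\zeta|_{N_2}]$ agree. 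Hence $[\alpha_1]\cdot H^3(X)\subseteq\langle f_1+f_2\rangle$, and likewise $[\alpha_3]\cdot H^3(X)\subseteq\langle f_7-f_3\rangle$ because $PD[N_3]=PD[N_7]$.

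Combining, the projection of $\mathcal{I}^5$ to $\bigoplus_j H^3(N_j)\otimes\langle\mathbf{x}_j\rangle$ lies in $\langle f_1+f_2,\ f_7-f_3\rangle$. Since $[y]=4(f_1-f_2)$ belongs to this summand yet is linearly independent of $f_1+f_2$ and $f_7-f_3$, it cannot equal the projection of any element of $\mathcal{I}^5$; thus $[y]\notin\mathcal{I}$, the Massey product is non-trivial, and $\widetilde{M}$ is non-formal. The conceptual point is that every class pulled back from $X$ restricts equally to $N_1$ and $N_2$ because they are homologous (condition (1)), so the ideal only ever produces the symmetric combination $f_1+f_2$, whereas $[y]$ is the antisymmetric combination $f_1-f_2$ — a direct reflection of the cobordism $C$ meeting $N_1$ negatively and $N_2$ positively (condition (2)). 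The main obstacle is the middle paragraph: one must verify that products of the generators with the second-type degree-$3$ classes never leak into the $f_1,f_2$ directions but remain inside $H^5(X)$, which is precisely what isolates the antisymmetry as the genuine obstruction.
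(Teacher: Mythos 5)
Your proposal is correct and follows essentially the same route as the paper: take $\xi_{12}=0$ and $\xi_{23}=\widetilde{\alpha}$, identify $[y]$ with $\pm 4\bigl([\varphi|_{N_1}]\otimes\mathbf{x}_1-[\varphi|_{N_2}]\otimes\mathbf{x}_2\bigr)$ via Lemma \ref{lem:prim-wtM}, and then show the degree-$5$ part of $\mathcal{I}$ projects into $H^5(X)\oplus\langle f_1+f_2,\,f_7-f_3\rangle$ using $PD[N_1]=PD[N_2]$, $PD[N_3]=PD[N_7]$ and the fact that $\mathbf{x}_j\mathbf{x}_k$ lives in $H^*(X)$. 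The only discrepancy is an overall sign on $[y]$ coming from the orientation convention in \eqref{eqn:closed-Massey}, which is immaterial to the conclusion.
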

\begin{proof}
Note that $(\tau_1 + \tau_2)\wedge (\tau_7+ \tau_3) =0$ and $(\tau_7+ \tau_3)\wedge (\tau_7-\tau_3)=\tau_7^2-\tau_3^2=d\widetilde{\alpha}$. The cohomology class determined by expression \eqref{eqn:closed-Massey} is
$$
[y]= [\widetilde{\alpha}\wedge (\tau_1 + \tau_2)],
$$
and the triple Massey product is trivial if and only if $y\in \mathcal{I}$, where
$\mathcal{I}$ is the ideal generated by $\tau_1 + \tau_2$ and $\tau_7-\tau_3$. We prove by direct computation that $y\notin \mathcal{I}$. 

First, by Lemma \ref{lem:prim-wtM} we know
$$
[y]= -4[\mathrm{pr}_1^*(\varphi|_{N_1}) \wedge \tau_1] + 4 [\mathrm{pr}_2^*(\varphi|_{N_2}) \wedge \tau_2].
$$
Under the isomorphism in equation \eqref{eqn:cohom}, this class is $y'=-4[\varphi|_{N_1}] \otimes \mathbf{x}_1 + 4[\varphi|_{N_2}]\otimes \mathbf{x}_2$.
We now compute the image  $\mathcal{I}'$ of the degree-5 part of the ideal $\mathcal{I}$ under the isomorphism,
namely
$$
H^3(X)\cdot \la \mathbf{x}_1+\mathbf{x}_2 \ra \oplus
H^3(X) \cdot \la \mathbf{x}_7-\mathbf{x}_3 \ra
\oplus (\la dy_3 \otimes \mathbf{x}_1, dy_3 \otimes \mathbf{x}_2, dx_3 \otimes \mathbf{x}_3, \dots, dx_3 \otimes \mathbf{x}_{10} \ra)\cdot (\la \mathbf{x}_1+\mathbf{x}_2 ,\mathbf{x}_7-\mathbf{x}_3 \ra) .
$$

The proof of Lemma \ref{lem:PDuals} ensures that 
 $\int_{N_1} \xi= \int_{N_2} \xi$ and  $\int_{N_3} \xi= \int_{N_7} \xi$ for every $[\xi] \in H^3(X)$.
 Fix $[\xi] \in H^3(X)$ and set $\lambda_1,\lambda_3\in \R$ so that $[\xi|_{N_j}]=\lambda_j [\varphi|_{N_j}]$ with $j=1,3$. Then, 
 $\int_{N_2}\xi=\int_{N_1}\xi= \lambda_1 \int_{N_1} \varphi= \lambda_1 \int_{N_2} \varphi$ and similarly $\int_{N_7}\xi= \lambda_3 \int_{N_7} \varphi$, that is, $[\xi|_{N_2}]=\lambda_1 [\varphi|_{N_2}]$, and  $[\xi|_{N_7}]=\lambda_3 [\varphi|_{N_7}]$. Hence, using equation \eqref{eqn:prod-2} we conclude, 
$$
[\xi] \cdot (\mathbf{x}_1+\mathbf{x}_2)= \lambda_1 ([\varphi|_{N_1}]\otimes \mathbf{x}_1+[\varphi|_{N_2}]\otimes \mathbf{x}_2), \qquad 
[\xi] \cdot (\mathbf{x}_7-\mathbf{x}_3)= \lambda_3 ( [\varphi|_{N_7}]\otimes \mathbf{x}_7-[\varphi|_{N_3}]\otimes \mathbf{x}_3).
$$
Thus,
$
H^3(X)\cdot \la \mathbf{x}_1+\mathbf{x}_2 \ra= \la [\varphi|_{N_1}]\otimes \mathbf{x}_1+[\varphi|_{N_2}]\otimes \mathbf{x}_2  \ra$, and  
$H^3(X)\cdot \la \mathbf{x}_7-\mathbf{x}_3 \ra= \la [\varphi|_{N_7}]\otimes \mathbf{x}_7-[\varphi|_{N_3}]\otimes \mathbf{x}_3  \ra .
$

In addition, 
$(\la dy_3 \otimes \mathbf{x}_1, dy_3 \otimes \mathbf{x}_2, dx_3 \otimes \mathbf{x}_3, \dots, dx_3 \otimes \mathbf{x}_{10} \ra)\cdot (\la \mathbf{x}_1+\mathbf{x}_2 ,\mathbf{x}_7-\mathbf{x}_3\ra)  \subset H^5(X)$ because
 $\mathbf{x}_j\mathbf{x}_k \in H^4(X)$ as a consequence of equation \eqref{eqn:prod-1}. For instance, $(dy_3\otimes \mathbf{x}_1) \cdot \mathbf{x}_1$ is represented by $\mathrm{pr}_1^*(dx_3) \wedge \tau_1^2$, and it induces a cohomology class on $X$ by pushforward as it is supported around $O_1$, and vanishes on a neighborhood of $E_1$. However, 
 $$
 y'=-4[\varphi|_{N_1}] \otimes \mathbf{x}_1 +4[\varphi|_{N_2}]\otimes \mathbf{x}_2 \notin H^5(X) \oplus \la [\varphi|_{N_1}]\otimes \mathbf{x}_1+[\varphi|_{N_2}]\otimes \mathbf{x}_2, [\varphi|_{N_7}]\otimes \mathbf{x}_7-[\varphi|_{N_3}]\otimes \mathbf{x}_3 \ra \supset \mathcal{I}',
 $$
 so $[y]\notin \mathcal{I}$.
\end{proof}

\end{document}